\newtheorem{theorem}{Theorem}
\newtheorem{remark}[theorem]{Remark}
\newtheorem{proposition}[theorem]{Proposition}
\newtheorem{corollary}[theorem]{Corollary}
\DeclareMathOperator*{\divergenz}{div}              %
\DeclareMathOperator*{\dist}{dist}          %
\DeclareMathOperator*{\ints}{int}         %
\DeclareMathOperator*{\ww}{w}         %
\DeclareMathOperator*{\Ss}{S}         %
\newcommand{\N}{\mathbb{N}}
\newcommand{\R}{\mathbb{R}}
\newcommand{\Lp}[1]{L^{#1}(\Omega)}
\newcommand{\Wp}[1]{W^{1,#1}(\Omega)}
\newcommand{\Wpzero}[1]{W^{1,#1}_0(\Omega)}
\newcommand{\lan}{\langle}
\newcommand{\ran}{\rangle}
\newcommand{\eps}{\varepsilon}
\newcommand{\ph}{\varphi}
\newcommand{\into}{\int_{\Omega}}
\newcommand{\weak}{\overset{\ww}{\to}}
\newcommand{\Linf}{L^{\infty}(\Omega)}
\newcommand{\close}{\overline{\Omega}}
\newcommand{\interior}{\ints \left(C^1_0(\overline{\Omega})_+\right)}
\newcommand{\cprime}{$'$}
\renewcommand{\l}{\left}
\renewcommand{\r}{\right}
\numberwithin{theorem}{section}
\numberwithin{equation}{section}
\title[On a class of singular anisotropic $(p,q)$-equations]{On a class of singular anisotropic $(p,q)$-equations}
\author[N.\,S.\,Papageorgiou]{Nikolaos S.\,Papageorgiou}
\address[N.\,S.\,Papageorgiou]{National Technical University, Department of Mathematics, Zografou Campus, Athens 15780, Greece}
\email{npapg@math.ntua.gr}
\author[P.\,Winkert]{Patrick Winkert}
\address[P.\,Winkert]{Technische Universit\"{a}t Berlin, Institut f\"{u}r Mathematik, Stra\ss e des 17.\,Juni 136, 10623 Berlin, Germany}
\email{winkert@math.tu-berlin.de}
\subjclass[2010]{35J75}
\keywords{Anisotropic $(p,q)$-Laplacian, singular term, superlinear perturbation, regularity theory, maximum principle, positive solutions}
\begin{document}

\begin{abstract}
	We consider a Dirichlet problem driven by the anisotropic $(p,q)$-Laplacian and with a reaction that has the competing effects of a singular term and of a parametric superlinear perturbation. Based on variational tools along with truncation and comparison techniques, we prove a bifurcation-type result describing the changes in the set of positive solutions as the parameter varies.
\end{abstract}

\maketitle

\section{Introduction}

Let $\Omega \subseteq \R^N$ be a bounded domain with a $C^2$-boundary $\partial \Omega$. In this paper, we study the following anisotropic Dirichlet problem
\begin{align}\tag{P$_\lambda$}\label{problem}
  \begin{split}
    &-\Delta_{p(\cdot)}u-\Delta_{q(\cdot)} u = u^{-\eta(x)}+\lambda f(x,u)\quad \text{in } \Omega \\
    &u\big|_{\partial \Omega}=0, \quad u>0, \quad \lambda>0.
   \end{split}
\end{align}
For $r \in E_1$, where $E_1$ is given by 
\begin{align*}
E_1=\l \{r \in C(\close)\, : \, 1<\min_{x\in \close} r(x) \r\},
\end{align*}
we denote by $\Delta_{r(\cdot)}$ the anisotropic $r$-Laplacian (or $r(\cdot)$-Laplacian) defined by
\begin{align*}
        \Delta_{r(\cdot)} u = \divergenz \l(|\nabla u|^{r(x)-2} \nabla u \r) \quad\text{for all }u \in \Wpzero{r(\cdot)}.
\end{align*}
The differential operator in problem \eqref{problem} is the sum of two such operators. In the reaction, the right-hand side of \eqref{problem}, we have the competing effects of two terms which are of different nature. One is the singular term $s \to s^{-\eta(x)}$ for $s>0$ with $\eta \in C(\close)$ such that $0<\eta(x)<1$ for all $x\in \close$.  The other one is the parametric term $s \to \lambda f(x,s)$ with $\lambda>0$ being the parameter and $f\colon \Omega\times\R\to\R$ is a Carath\'eodory function, that is, $x\to f(x,s)$ is measurable for all $s\in\R$ and $s\to f(x,s)$ is continuous for a.\,a.\,$x\in\Omega$. We assume that $f(x,\cdot)$ exhibits $(p_+-1)$-superlinear growth for a.\,a.\,$x\in\Omega$ as $s\to +\infty$ with $p_+=\max_{x\in\close}p(x)$. We are looking for positive solutions of problem \eqref{problem} and our aim is to determine how the set of positive solutions of \eqref{problem} changes as the parameter $\lambda$ moves on the semiaxis $\overset{\circ}{\R}_+=(0,+\infty)$.

The starting point of our work is the recent paper of Papageorgiou-Winkert \cite{15-Papageorgiou-Winkert-2019} where the authors study a similar problem driven by the isotropic $p$-Laplacian. So, the differential operator in \cite{15-Papageorgiou-Winkert-2019} is $(p-1)$-homogeneous and this property is exploited in their arguments. In contrast here, the differential operator is both nonhomogeneous and anisotropic.

Anisotropic problems with competition phenomena in the source were recently investigated by Papageorgiou-R\u{a}dulescu-Repov\v{s} \cite{12-Papageorgiou-Radulescu-Repovs-2020}. They studied concave-convex problems driven by the $p(\cdot)$-Laplacian plus an indefinite potential term. In their equation there is no singular term. In fact, the study of anisotropic singular problems is lagging behind. We are aware only the works of Byun-Ko \cite{2-Byun-Ko-2017} and Saoudi-Ghanmi \cite{20-Saoudi-Ghanmi-2017} for Dirichlet as well as of  Saoudi-Kratou-Alsadhan \cite{21-Saoudi-Kratou-Alsadhan-2016} for Neumann problems. All the aforementioned works deal with equations driven by the $p(\cdot)$-Laplacian.

We mention that equations driven by the sum of two differential operators of different nature arise often in the mathematical models of physical processes. We mention the works of Bahrouni-R\u{a}dulescu-Repov\v{s} \cite{1-Bahrouni-Radulescu-Repovs-2019} (transonic flow problems), Cherfils-Il\cprime yasov \cite{3-Cherfils-Ilyasov-2005} (reaction diffusion systems) and Zhikov \cite{25-Zhikov-2011} (elasticity problems). Some recent regularity and multiplicity results can be found in the works of Ragusa-Tachikawa \cite{19-Ragusa-Tachikawa-2020} and Papageorgiou-Zhang \cite{17-Papageorgiou-Zhang-2020}.

In this paper, under general conditions on the perturbation $f\colon\Omega\times\R\to\R$ which are less restrictive than all the previous cases in the literature, we prove the existence of a critical parameter $\lambda^*>0$ such that
\begin{enumerate}
	\item[$\bullet$]
		for every $\lambda\in(0,\lambda^*)$, problem \eqref{problem} has at least two positive smooth solutions;
	\item[$\bullet$]
		for $\lambda=\lambda^*$, problem \eqref{problem}  has at least one positive smooth solution;
	\item[$\bullet$]
		for every $\lambda>\lambda^*$, problem \eqref{problem} has no positive solutions.
\end{enumerate}

\section{Preliminaries and Hypotheses}

The study of anisotropic equations uses Lebesgue and Sobolev spaces with variable exponents. A comprehensive presentation of the theory of such spaces can be found in the book of Diening-Harjulehto-H\"{a}st\"{o}-R$\mathring{\text{u}}$\v{z}i\v{c}ka \cite{4-Diening-Harjulehto-Hasto-Ruzicka-2011}.

Recall that $E_1=\{r\in C(\close) \, : \, 1<\min_{x\in \close} r(x) \}$. For any $r\in E_1$ we define
\begin{align*}
	r_-=\min_{x\in \close}r(x) \qquad\text{and}\qquad r_+=\max_{x\in\close} r(x).
\end{align*}
Moreover, let $M(\Omega)$ be the space of all measurable functions $u\colon \Omega\to\R$. As usual, we identify two such functions when they differ only on a Lebesgue-null set. Then, given $r \in E_1$, the variable exponent Lebesgue space $\Lp{r(\cdot)}$ is defined as
\begin{align*}
	\Lp{r(\cdot)}=\l\{u \in M(\Omega)\,:\, \into |u|^{r(x)}\,dx<\infty \r\}.
\end{align*}
We equip this space with the so-called Luxemburg norm defined by
\begin{align*}
	\|u\|_{r(\cdot)} =\inf \l \{\lambda>0 \, : \, \into \l(\frac{|u|}{\lambda}\r)^{r(x)}\,dx \leq 1 \r\}.
\end{align*}
Then $(\Lp{r(\cdot)},\|\cdot\|_{r(\cdot)})$ is a separable and reflexive Banach space, in fact it is uniformly convex. Let $r' \in E_1$ be the conjugate variable exponent to $r$, that is,
\begin{align*}
	\frac{1}{r(x)}+\frac{1}{r'(x)}=1 \quad\text{for all }x\in\Omega.
\end{align*}
We know that $\Lp{r(\cdot)}^*=\Lp{r'(\cdot)}$ and the following H\"older type inequality holds
\begin{align*}
	\into |uv| \,dx \leq \l[\frac{1}{r_-}+\frac{1}{r'_-}\r] \|u\|_{r(\cdot)}\|v\|_{r'(\cdot)}
\end{align*}
for all $u\in \Lp{r(\cdot)}$ and for all $v \in \Lp{r'(\cdot)}$.

If $r_1, r_2\in E_1$ and $r_1(x) \leq r_2(x)$ for all $x\in \close$, then we have that
\begin{align*}
	\Lp{r_2(\cdot)} \hookrightarrow \Lp{r_1(\cdot)} \quad \text{continuously}.
\end{align*}

The corresponding variable exponent Sobolev spaces can be defined in a natural way using the variable exponent Lebesgue spaces. So, if $r \in E_1$, then the variable exponent Sobolev space $\Wp{r(\cdot)}$ is defined by
\begin{align*}
	\Wp{r(\cdot)}=\l\{ u \in \Lp{r(\cdot)} \,:\, |\nabla u| \in \Lp{r(\cdot)}\r\}.
\end{align*}

Here the gradient $\nabla u$ is understood in the weak sense. We equip $\Wp{r(\cdot)}$ with the following norm
\begin{align*}
	\|u\|_{1,r(\cdot)}=\|u\|_{r(\cdot)}+\||\nabla u|\|_{r(\cdot)} \quad\text{for all } u \in \Wp{r(\cdot)}.
\end{align*}
In what follows we write $\|\nabla u\|_{r(\cdot)}= \||\nabla u|\|_{r(\cdot)}$. Suppose that $r \in E_1$ is Lipschitz continuous, that is, $r_1 \in E_1 \cap C^{0,1}(\close)$. We define 
\begin{align*}
	\Wpzero{r(\cdot)}= \overline{C^\infty_c(\Omega)}^{\|\cdot\|_{1,r(\cdot)}}.
\end{align*}
The spaces $\Wp{r(\cdot)}$ and $\Wpzero{r(\cdot)}$ are both separable and reflexive, in fact uniformly convex Banach spaces. On the space $\Wpzero{r(\cdot)}$ we have the Poincar\'e inequality, namely there exists $c_0>0$ such that 
\begin{align*}
	\|u\|_{r(\cdot)} \leq c_0 \|\nabla u\|_{r(\cdot)} \quad\text{for all } u \in \Wpzero{r(\cdot)}.
\end{align*}
Therefore, we can consider on $\Wpzero{r(\cdot)}$ the equivalent norm
\begin{align*}
	\|u\|_{1,r(\cdot)}=\|\nabla u\|_{r(\cdot)} \quad\text{for all } u \in \Wpzero{r(\cdot)}.
\end{align*}
For $r \in E_1$ we introduce the critical Sobolev variable exponent $r^*$ defined by
\begin{align*}
	r^*(x)=
	\begin{cases}
		\frac{Nr(x)}{N-r(x)} & \text{if }r(x)<N,\\
		+\infty & \text{if } N \leq r(x),
	\end{cases} \quad\text{for all }x\in\close.
\end{align*}
Suppose that $r \in E_1 \cap C^{0,1}(\close)$, $q \in E_1$, $q_+<N$ and $1 <q(x) \leq r^*(x)$ for all $x\in\close$. Then we have
\begin{align*}
	\Wpzero{r(\cdot)} \hookrightarrow \Lp{q(\cdot)} \quad \text{continuously}.
\end{align*} 
Similarly, if $1 <q(x) < r^*(x)$ for all $x\in\close$, we have
\begin{align*}
\Wpzero{r(\cdot)} \hookrightarrow \Lp{q(\cdot)} \quad \text{compactly}.
\end{align*} 

In the study of the variable exponent spaces, the modular function is important, that is, for $r\in E_1$,
\begin{align*}
	\varrho_{r(\cdot)}(u) =\into |u|^{r(x)}\,dx \quad\text{for all } u\in\Lp{r(\cdot)}.
\end{align*}
As before we write $\varrho_{r(\cdot)}(\nabla u)=\varrho_{r(\cdot)}(|\nabla u|)$. The importance of this function comes from the fact that it is closely related to the norm of the space. This is evident in the next proposition.

\begin{proposition}\label{proposition_1}
	If $r\in E_1$, then we have the following assertions:
	\begin{enumerate}
		\item[(a)]
			$\|u\|_{r(\cdot)}=\lambda \quad\Longleftrightarrow\quad \varrho_{r(\cdot)}\l(\frac{u}{\lambda}\r)=1$ for all $u \in \Lp{r(\cdot)}$ with $u\neq 0$;
		\item[(b)]
			$\|u\|_{r(\cdot)}<1$ (resp. $=1$, $>1$) $\quad\Longleftrightarrow\quad \varrho_{r(\cdot)}(u)<1$ (resp. $=1$, $>1$);
		\item[(c)]
			$\|u\|_{r(\cdot)}<1$ $\quad\Longrightarrow\quad$ $\|u\|_{r(\cdot)}^{r_+} \leq \varrho_{r(\cdot)}(u) \leq \|u\|_{r(\cdot)}^{r_-}$;
		\item[(d)]
			$\|u\|_{r(\cdot)}>1$ $\quad\Longrightarrow\quad$ $\|u\|_{r(\cdot)}^{r_-} \leq \varrho_{r(\cdot)}(u) \leq \|u\|_{r(\cdot)}^{r_+}$;
		\item[(e)]
			$\|u_n\|_{r(\cdot)} \to 0 \quad\Longleftrightarrow\quad\varrho_{r(\cdot)}(u_n)\to 0$;
		\item[(f)]
			$\|u_n\|_{r(\cdot)}\to +\infty \quad\Longleftrightarrow\quad \varrho_{r(\cdot)}(u_n)\to +\infty$.
	\end{enumerate}
\end{proposition}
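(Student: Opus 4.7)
The strategy is to base everything on the auxiliary map
\[
\psi_u(\lambda):=\varrho_{r(\cdot)}(u/\lambda)=\into |u|^{r(x)}\lambda^{-r(x)}\,dx,\qquad \lambda>0,
\]
defined for $u\in\Lp{r(\cdot)}\setminus\{0\}$. Standard pointwise monotone/dominated convergence arguments show that $\psi_u$ is continuous on $(0,\infty)$, strictly decreasing, blows up to $+\infty$ as $\lambda\to 0^+$, and vanishes as $\lambda\to+\infty$, so there is a unique $\lambda_0>0$ with $\psi_u(\lambda_0)=1$. Assertion (a) is then precisely the statement that the Luxemburg infimum coincides with $\lambda_0$, which follows from the strict monotonicity of $\psi_u$ (the ``$\le 1$'' set under the infimum is exactly $[\lambda_0,\infty)$). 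Item (b) is an immediate consequence: writing $\lambda_0=\|u\|_{r(\cdot)}$ and comparing $|u(x)|^{r(x)}$ with $(|u(x)|/\lambda_0)^{r(x)}$ pointwise, integration together with (a) places $\varrho_{r(\cdot)}(u)$ on the same side of $1$ as $\|u\|_{r(\cdot)}$.

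The core of the argument is (c)--(d), which I obtain by inserting the identity $\varrho_{r(\cdot)}(u/\|u\|_{r(\cdot)})=1$ from (a) and separating the factor $\|u\|_{r(\cdot)}^{-r(x)}$. When $0<\|u\|_{r(\cdot)}<1$, the function $t\mapsto \|u\|_{r(\cdot)}^{-t}$ is increasing, so
\[
\|u\|_{r(\cdot)}^{-r_-}\le \|u\|_{r(\cdot)}^{-r(x)}\le \|u\|_{r(\cdot)}^{-r_+}\qquad\text{for all }x\in\close.
\]
Multiplying by $|u|^{r(x)}\ge 0$, integrating, and rearranging yields
\[
\|u\|_{r(\cdot)}^{r_+}\le \varrho_{r(\cdot)}(u)\le \|u\|_{r(\cdot)}^{r_-},
\]
which is (c). The argument for (d) is identical once one notes that for $\|u\|_{r(\cdot)}>1$ the monotonicity of $t\mapsto \|u\|_{r(\cdot)}^{-t}$ reverses, swapping the roles of $r_-$ and $r_+$.

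Finally, (e) and (f) fall out of (c)--(d) by a squeeze. For (e), if $\|u_n\|_{r(\cdot)}\to 0$ then eventually $\|u_n\|_{r(\cdot)}<1$, so (c) pinches $\varrho_{r(\cdot)}(u_n)$ between $\|u_n\|_{r(\cdot)}^{r_+}$ and $\|u_n\|_{r(\cdot)}^{r_-}$, both vanishing; conversely, if $\varrho_{r(\cdot)}(u_n)\to 0$ then eventually the modular is $<1$, which by (b) forces $\|u_n\|_{r(\cdot)}<1$, and the same sandwich delivers $\|u_n\|_{r(\cdot)}\to 0$. Item (f) is handled symmetrically via (d) once the sequence enters the regime $\|u_n\|_{r(\cdot)}>1$. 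The only point in the whole plan that requires any real care is the attainment of the infimum in (a); everything else is pointwise manipulation of the exponent $r(x)$ in the Luxemburg identity.
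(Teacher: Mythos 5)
Your argument is correct: establishing that $\lambda\mapsto\varrho_{r(\cdot)}(u/\lambda)$ is continuous, strictly decreasing, with limits $+\infty$ and $0$ (all of which use $1<r_-\le r_+<\infty$ and $u\neq 0$) yields (a), and the pointwise splitting $|u|^{r(x)}=\|u\|_{r(\cdot)}^{r(x)}\bigl(|u|/\|u\|_{r(\cdot)}\bigr)^{r(x)}$ together with the unit-modular identity gives (b)--(d), from which (e)--(f) follow by the sandwich you describe. The paper states this proposition without proof as a standard fact from the variable-exponent literature, and your proof is exactly the canonical one found there.
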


We know that for $r\in E_1\cap C^{0,1}(\close)$, we have
\begin{align*}
	\Wpzero{r(\cdot)}^*=W^{-1,r'(\cdot)}(\Omega).
\end{align*}
Then we can introduce the nonlinear map $A_{r(\cdot)}\colon \Wpzero{r(\cdot)}\to W^{-1,r'(\cdot)}(\Omega)$ defined by
\begin{align*}
	\l\lan A_{r(\cdot)}(u),h\r\ran= \into |\nabla u|^{r(x)-2} \nabla u \cdot \nabla h\,dx\quad\text{for all }u,h\in\Wpzero{r(\cdot)}.
\end{align*} 
This map has the following properties, see, for example Gasi\'nski-Papageorgiou \cite[Proposition 2.5]{7-Papageorgiou-Gasinski-2011} and R\u{a}dulescu-Repov\v{s} \cite[p.\,40]{18-Radulescu-Repovs-2015}.

\begin{proposition}\label{proposition_2}
	The operator $A_{r(\cdot)}\colon \Wpzero{r(\cdot)}\to W^{-1,r'(\cdot)}(\Omega)$ is bounded (so it maps bounded sets to bounded sets), continuous, strictly monotone (which implies it is also maximal monotone) and of type $\Ss_+$, that is,
	\begin{align*}
		u_n\weak u \text{ in }\Wpzero{r(\cdot)} \text{ and } \limsup_{n\to\infty} \l\lan A_{r(\cdot)}(u_n),u_n-u\r\ran \leq 0
	\end{align*}
	imply $u_n\to u$ in $\Wpzero{r(\cdot)}$.
\end{proposition}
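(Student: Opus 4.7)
The plan is to verify each property in turn, noting that the first four are fairly standard once the correct integrability relations are in place, while the $(\Ss_+)$-property is where the real work sits.

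\emph{Boundedness and continuity.} Since $(r(x)-1)\,r'(x)=r(x)$, the map $\xi\mapsto |\xi|^{r(x)-2}\xi$ sends $\Lpvalued{r(\cdot)}$ into $\Lpvalued{r'(\cdot)}$, with modular equal to $\varrho_{r(\cdot)}(\xi)$. First I would apply the H\"older-type inequality to
\[
\l\lan A_{r(\cdot)}(u),h\r\ran=\into |\nabla u|^{r(x)-2}\nabla u\cdot\nabla h\,dx
\]
to bound $\|A_{r(\cdot)}(u)\|_{W^{-1,r'(\cdot)}}$ by a constant times $\||\nabla u|^{r(\cdot)-1}\|_{r'(\cdot)}$, and then use Proposition~\ref{proposition_1}(c)--(d) to control the latter by a power of $\|\nabla u\|_{r(\cdot)}$, giving boundedness. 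For continuity I would take $u_n\to u$ in $\Wpzero{r(\cdot)}$; then $\nabla u_n\to\nabla u$ in $\Lpvalued{r(\cdot)}$, hence (along a subsequence) a.e.\ and dominated in modulus by an $\Lp{r(\cdot)}$ function. The elementary continuity of $\xi\mapsto |\xi|^{r(x)-2}\xi$ together with Vitali's theorem (applied in the modular and converted back to norm via Proposition~\ref{proposition_1}(e)) yields $|\nabla u_n|^{r(\cdot)-2}\nabla u_n\to |\nabla u|^{r(\cdot)-2}\nabla u$ in $\Lpvalued{r'(\cdot)}$, which gives $A_{r(\cdot)}(u_n)\to A_{r(\cdot)}(u)$ in $W^{-1,r'(\cdot)}(\Omega)$ by duality.

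\emph{Strict monotonicity and maximality.} Here I would invoke the pointwise inequality
\[
\bigl(|\xi|^{r(x)-2}\xi-|\eta|^{r(x)-2}\eta\bigr)\cdot(\xi-\eta)\ge 0,
\]
with equality iff $\xi=\eta$, valid for all $\xi,\eta\in\R^N$ and $r(x)>1$. Integrating gives monotonicity, and the equality case shows that $\lan A_{r(\cdot)}(u)-A_{r(\cdot)}(v),u-v\ran=0$ forces $\nabla u=\nabla v$ a.e., hence $u=v$ by the Poincar\'e inequality. Maximal monotonicity is then automatic: a continuous monotone operator from a reflexive Banach space into its dual is demicontinuous and everywhere defined, hence maximal monotone.

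\emph{The $(\Ss_+)$-property.} This is the main point. Assume $u_n\weak u$ in $\Wpzero{r(\cdot)}$ and $\lims\lan A_{r(\cdot)}(u_n),u_n-u\ran\le 0$. By monotonicity,
\[
0\le \lan A_{r(\cdot)}(u_n)-A_{r(\cdot)}(u),u_n-u\ran=\lan A_{r(\cdot)}(u_n),u_n-u\ran-\lan A_{r(\cdot)}(u),u_n-u\ran,
\]
and the last term tends to $0$ by weak convergence, so $\lim_{n\to\infty}\lan A_{r(\cdot)}(u_n)-A_{r(\cdot)}(u),u_n-u\ran=0$. Splitting $\Omega$ into $\{r(x)\ge 2\}$ and $\{1<r(x)<2\}$, I would apply the two standard vector inequalities (a direct one in the first region, and the Simon-type estimate $(|\xi|^{r(x)-2}\xi-|\eta|^{r(x)-2}\eta)\cdot(\xi-\eta)\ge c\,|\xi-\eta|^{2}(|\xi|+|\eta|)^{r(x)-2}$ in the second, combined with H\"older) to deduce that $|\nabla u_n-\nabla u|^{r(x)}\to 0$ in $L^1(\Omega)$. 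This means $\varrho_{r(\cdot)}(\nabla(u_n-u))\to 0$, and Proposition~\ref{proposition_1}(e) converts this to $\|\nabla(u_n-u)\|_{r(\cdot)}\to 0$, i.e.\ $u_n\to u$ in $\Wpzero{r(\cdot)}$.

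The main obstacle is this last step, specifically handling the subregion $\{1<r(x)<2\}$: the pointwise inequality there is degenerate when $\xi$ or $\eta$ vanishes, so one must insert it inside a H\"older argument (with exponents $\tfrac{2}{r(x)}$ and $\tfrac{2}{2-r(x)}$) to recover a modular bound on $|\nabla u_n-\nabla u|^{r(x)}$, and then justify passage to the limit in the auxiliary integral using the boundedness of $\{u_n\}$ in $\Wpzero{r(\cdot)}$ guaranteed by the weak convergence.
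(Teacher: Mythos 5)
Your argument is correct. Note that the paper does not prove this proposition at all -- it simply cites Gasi\'nski--Papageorgiou and R\u{a}dulescu--Repov\v{s} -- so there is no in-text proof to compare against; what you have written is essentially the standard argument that appears in those references (modular/H\"older estimates for boundedness, Nemytskii continuity via a.e.\ convergence and domination, the elementary vector inequality for strict monotonicity, and the Simon-type inequalities split over $\{r(x)\geq 2\}$ and $\{1<r(x)<2\}$ for the $(\Ss_+)$-property). You also correctly identify the only genuinely delicate point, namely that on $\{1<r(x)<2\}$ the pointwise H\"older/Young step must be carried out with an $\eps$-parameter uniformly in $x$ (using the boundedness of $\{\nabla u_n\}$ in $\Lp{r(\cdot)}$) before passing to the limit in the modular; with that in place the sketch is complete.
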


Another space that we will use as a result of the anisotropic regularity theory is the Banach space
\begin{align*}
	C^1_0(\close)=\l\{u\in C^1(\close)\, :\, u\big|_{\partial\Omega}=0  \r\}.
\end{align*}
This is an ordered Banach space with positive (order) cone
\begin{align*}
	C^1_0(\overline{\Omega})_+=\left\{u \in C^1_0(\overline{\Omega}): u(x) \geq 0 \text{ for all } x \in \overline{\Omega}\right\}.
\end{align*}
This cone has a nonempty interior given by
\begin{align*}
\ints \left(C^1_0(\overline{\Omega})_+\right)=\left\{u \in C^1_0(\overline{\Omega})_+: u(x)>0 \text{ for all } x \in \Omega \text{, } \frac{\partial u}{\partial n}\bigg|_{\partial\Omega}<0  \right\},
\end{align*}
where $\frac{\partial u}{\partial n}=\nabla u \cdot n$ with $n$ being the outward unit normal on $\partial \Omega$.

Let $h_1,h_2\in M(\Omega)$. We write $h_1 \preceq h_2$ if and only if $0<c_K\leq h_2(x)- h_1(x)$ for a.\,a.\,$x\in K$ and for all compact sets $K\subseteq \Omega$. It is clear that if $h_1,h_2\in C(\Omega)$ and $h_1(x)< h_2(x)$ for all $x\in \Omega$, then $h_1 \preceq h_2$. From Papageorgiou-R\u{a}dulescu-Repov\v{s} \cite[Proposition 2.4]{12-Papageorgiou-Radulescu-Repovs-2020} and Papageorgiou-R\u{a}dulescu-Repov\v{s} \cite[Propositions 6 and 7]{11-Papageorgiou-Radulescu-Repovs-2020}, we have the following comparison principles. In what follows, let $p,q \in E_1\cap C^{0,1}(\close)$ with $q(x)<p(x)$ for all $x\in \close$ and $\eta \in C(\close)$ with $0 <\eta(x) <1$ for all $x\in \close$.

\begin{proposition}\label{proposition_3}$~$
	\begin{enumerate}
		\item[(a)]
			If $\hat{\xi} \in \Linf$, $\hat{\xi}(x) \geq 0$ for a.\,a.\,$x\in \Omega$, $h_1, h_2\in \Linf$, $h_1\preceq h_2$, $u\in C^1_0(\overline{\Omega})_+$, $u>0$ for all $x\in \Omega$, $v\in\interior$ and
			\begin{align*}
				-\Delta_{p(\cdot)}u-\Delta_{q(\cdot)}u+\hat{\xi} (x) u^{p(x)-1}-u^{-\eta(x)}=h_1(x) & \text{ in }\Omega,\\
				-\Delta_{p(\cdot)}v-\Delta_{q(\cdot)}v+\hat{\xi} (x) v^{p(x)-1}-v^{-\eta(x)}=h_2(x) & \text{ in }\Omega,
			\end{align*}
			then $v-u \in\interior$.
		\item[(b)]
			If $\hat{\xi} \in \Linf$, $\hat{\xi} \geq 0$ for a.\,a.\,$x\in \Omega$, $h_1, h_2\in \Linf$, $0<\hat{c}\leq h_2(x)-h_1(x)$ for a.\,a.\,$x\in\Omega$, $u,v\in C^1(\close)\setminus \{0\}$, $u(x) \leq v(x)$ for all $x\in \Omega$, $v\in\interior$ and
			\begin{align*}
			-\Delta_{p(\cdot)}u-\Delta_{q(\cdot)}u+\hat{\xi} (x) u^{p(x)-1}-u^{-\eta(x)}=h_1(x) & \text{ in }\Omega,\\
			-\Delta_{p(\cdot)}v-\Delta_{q(\cdot)}v+\hat{\xi} (x) v^{p(x)-1}-v^{-\eta(x)}=h_2(x) & \text{ in }\Omega,
			\end{align*}
			then $u(x)<v(x)$ for all $x\in\Omega$.
	\end{enumerate}
\end{proposition}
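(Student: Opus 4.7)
The plan rests on two monotonicity ingredients shared by both parts. First, for $s>0$ the map $s\mapsto \hat\xi(x)s^{p(x)-1}-s^{-\eta(x)}$ is strictly increasing, because both summands are. Second, the operator $A_{p(\cdot)}+A_{q(\cdot)}$ is strictly monotone (and of type $\Ss_+$) by Proposition \ref{proposition_2}. The idea is to combine these facts to compare $u$ and $v$ via a standard test-function argument and then to upgrade the resulting weak comparison to a strong one using the anisotropic strong maximum principle and Hopf boundary point lemma for the $(p(\cdot),q(\cdot))$-operator established in the cited works of Papageorgiou--R\u adulescu--Repov\v s.

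For part (a), the first task is to establish $u\le v$ in $\Omega$. I would subtract the equation for $v$ from the one for $u$ and test the difference against $\varphi:=(u-v)^+\in\Wpzero{p(\cdot)}$. On $\{u>v\}$ the operator contribution is nonnegative by strict monotonicity, $\hat\xi[u^{p(x)-1}-v^{p(x)-1}]\varphi\ge 0$ because $\hat\xi\ge 0$ and $p(x)>1$, and $-[u^{-\eta(x)}-v^{-\eta(x)}]\varphi\ge 0$ because $\eta(x)>0$; meanwhile the right-hand side $\int_\Omega(h_1-h_2)\varphi\,dx$ is nonpositive since $h_1\preceq h_2$ implies $h_1\le h_2$. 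Equality forces every term to vanish, and strict $\Ss_+$-monotonicity yields $|\{u>v\}|=0$. The second task is to promote $w:=v-u\ge 0$ to $w\in\interior$. Here the compact-gap condition $h_1\preceq h_2$ produces a strictly positive source term for a (linearized) differential inequality satisfied by $w$, and the anisotropic strong maximum principle together with the Hopf boundary point lemma then deliver $w>0$ in $\Omega$ and $\partial w/\partial n<0$ on $\partial\Omega$. For part (b) the comparison $u\le v$ is already assumed, so only the upgrade is needed: the uniform gap $\hat c>0$ makes the source in the differential inequality for $w=v-u\ge 0$ uniformly positive, and the anisotropic strong maximum principle then forbids $w$ from touching zero at any interior point, yielding the strict pointwise inequality $u(x)<v(x)$ throughout $\Omega$; the weaker regularity assumption $u,v\in C^1(\close)\setminus\{0\}$ is what prevents a Hopf-type boundary conclusion here.

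The main technical obstacle is admissibility of $(u-v)^+$ as a test function in the presence of the singular nonlinearity: this requires boundary decay bounds of the form $u(x),v(x)\ge c\,\dist(x,\partial\Omega)$ near $\partial\Omega$ so that $u^{-\eta(\cdot)}$ and $v^{-\eta(\cdot)}$ are integrable against $\varphi$. For $v$ this is immediate from $v\in\interior$; for $u$ it follows from $u\in C^1_0(\close)_+$ with $u>0$ in $\Omega$ together with the singular equation $u$ satisfies. Once these estimates are in hand, the remainder of the argument is essentially algebraic, and the genuinely delicate ingredient imported from the references is the anisotropic version of the strong maximum principle and Hopf boundary point lemma for the variable-exponent $(p,q)$-operator.
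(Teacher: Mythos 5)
The paper does not actually prove this proposition: it is quoted verbatim from Papageorgiou--R\u adulescu--Repov\v s \cite[Proposition 2.4]{12-Papageorgiou-Radulescu-Repovs-2020} and \cite[Propositions 6 and 7]{11-Papageorgiou-Radulescu-Repovs-2020}, with the weak comparison $u\le v$ in part (a) attributed separately to Tolksdorf \cite{23-Tolksdorf-1983}. So there is no in-paper argument to compare against; what can be assessed is whether your sketch would constitute a proof. The first half of your part (a) --- testing the difference of the two equations against $(u-v)^+$, checking the signs of the operator term, the $\hat\xi$-term and the singular term on $\{u>v\}$, and handling the integrability of $u^{-\eta(\cdot)}(u-v)^+$ via a bound $u\ge c\,\hat d$ --- is correct and is exactly the standard route to $u\le v$.

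The genuine gap is in the upgrade from $w=v-u\ge 0$ to $w\in\interior$ (and, in part (b), to $w>0$ in $\Omega$). You write that ``the anisotropic strong maximum principle together with the Hopf boundary point lemma then deliver'' the conclusion, but the strong maximum principle applies to a \emph{solution} of a single quasilinear equation, not to the difference of solutions of two degenerate quasilinear equations. To apply linear theory to $w$ one must rewrite $\Delta_{p(\cdot)}v-\Delta_{p(\cdot)}u$ (and the $q(\cdot)$-part) as $\divergenz(A(x)\nabla w)$ with $A(x)=\int_0^1 D a\l(\nabla u+t\nabla w\r)dt$, and this matrix is uniformly elliptic only where $|\nabla u|+|\nabla v|$ is bounded away from zero; at common critical points of $u$ and $v$ the linearization degenerates and the classical Hopf/SMP machinery breaks down. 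This is precisely why strong comparison principles for $p$-Laplacian-type operators are delicate, and why the hypotheses are shaped as they are: $v\in\interior$ gives $\partial v/\partial n<0$, hence nondegeneracy of the gradients in a neighborhood of $\partial\Omega$, where the Hopf argument can legitimately run; the compact-gap condition $h_1\preceq h_2$ (respectively the uniform gap $\hat c$ in (b)) and the strict monotonicity of $s\mapsto -s^{-\eta(x)}$ are then used in a separate interior argument (a tangency/connectedness argument on the set $\{u=v\}$, as in \cite{11-Papageorgiou-Radulescu-Repovs-2020}) to exclude interior touching. Your sketch treats this entire step as a black-box application of the SMP, which is exactly the part that the cited propositions exist to justify; as written, the proposal does not close it.
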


\begin{remark}
	Note that in part (a) of Proposition \ref{proposition_3} we have by the weak comparison principle that $u \leq v$, see Tolksdorf \cite{23-Tolksdorf-1983}.
\end{remark}

If $u,v\in\Wpzero{p(\cdot)}$ with $u\leq v$, then we define
\begin{align*}
[u,v]&=\left\{y\in\Wpzero{p(\cdot)}: u(x) \leq y(x) \leq v(x) \text{ for a.\,a.\,}x\in\Omega\right\},\\[1ex]
[u) & = \left\{y\in \Wpzero{p(\cdot)}: u(x) \leq y(x) \text{ for a.\,a.\,}x\in\Omega\right\}.
\end{align*}

In what follows we will denote by $\|\cdot\|$ the norm of the Sobolev space $\Wpzero{p(\cdot)}$. By the Poincar\'e inequality we have
\begin{align*}
	\|u\|=\|\nabla u \|_{p(\cdot)}\quad\text{ for all }u\in\Wpzero{p(\cdot)}.
\end{align*}

Suppose that $X$ is a Banach space and let $\ph \in C^1(X)$. We denote the critical set of $\ph$ by
\begin{align*}
	K_\ph=\left\{ u\in X: \ph'(u)=0\right\}.
\end{align*}
Moreover, we say that $\ph$ satisfies the ``Cerami condition'', C-condition for short, if every sequence $\{u_n\}_{n\in\N}\subseteq X$ such that $\{\ph(u_n)\}_{n\in\N}\subseteq \R$ is bounded and 
\begin{align*}
	\left(1+\|u_n\|_X\right)\ph'(u_n) \to 0\quad\text{in }X^* \text{ as }n\to \infty,
\end{align*}    
admits a strongly convergent subsequence. This is a compactness-type condition on the functional $\ph$ which compensates for the fact that the ambient space $X$ need not be locally compact being in general infinite dimensional. Applying this condition, one can prove a deformation theorem from which the minimax theorems for the critical values of $\ph$ follow. We refer to Papageorgiou-R\u{a}dulescu-Repov\v{s} \cite[Chapter 5]{13-Papageorgiou-Radulescu-Repovs-2019} and Struwe \cite[Chapter II]{Struwe-2008}.

Given $s\in (1,+\infty)$ we denote by $s'\in(1,+\infty)$ the conjugate exponent defined by
\begin{align*}
	\frac{1}{s}+\frac{1}{s'}=1.
\end{align*}
Furthermore, if $f\colon\Omega\times\R\to\R$ is a measurable function, then we denote by $N_f$ the Nemytskii (also called superposition) operator corresponding to $f$, that is,
\begin{align*}
	N_f(u)(\cdot)=f(\cdot,u(\cdot)) \quad\text{for all }u\in M(\Omega).
\end{align*}
Note that $x\to f(x,u(x))$ is measurable. We know that if $f\colon\Omega\times\R\to\R$ is a Carath\'eodory function, then $f(\cdot,\cdot)$ is jointly measurable, see Papageorgiou-Winkert \cite[p.\,106]{16-Papageorgiou-Winkert-2018}.

Now we are in the position to introduce our hypotheses on the data of problem \eqref{problem}.
\begin{enumerate}
	\item[H$_0$:]
		$p,q \in E_1\cap C^{0,1}(\close)$, $\eta\in C(\close)$, $q(x)<p(x)$, $0<\eta(x)<1$ for all $x\in \close$, $p_-<N$.
\end{enumerate}
\begin{enumerate}
	\item[H$_1$:]
		$f\colon\Omega\times\R\to\R$ is a Carath\'eodory function such that $f(x,0)=0$ for a.\,a.\,$x\in\Omega$ and
		\begin{enumerate}
			\item[(i)]
				there exists $a \in \Linf$ such that
				\begin{align*}
					0\leq f(x,s) \leq a(x) \l [ 1+s^{r-1}\r]
				\end{align*}
				for a.\,a.\,$x\in \Omega$, for all $s\geq 0$ and with $p_+<r<p_-^*$, where
				'\begin{align*}
					p_-^*=\frac{Np_-}{N-p_-};
				\end{align*}
			\item[(ii)]
				if $F(x,s)=\displaystyle\int_0^s f(x,t)\,dt$, then
				\begin{align*}
					\lim_{s\to +\infty} \frac{F(x,s)}{s^{p_+}}=+\infty \quad\text{uniformly for a.\,a.\,}x\in\Omega;
				\end{align*}
			\item[(iii)]
				there exists a function $\tau\in C(\close)$ such that
				\begin{align*}
					\tau(x) \in \l (\l(r-p_-\r)\frac{N}{p_-}, p^*(x) \r)\quad\text{ for all }x\in\close
				\end{align*}
				and
				\begin{align*}
					0<\gamma_0 \leq \liminf_{s\to +\infty} \frac{f(x,s)s-p_+F(x,s)}{s^{\tau(x)}} \quad \text{uniformly for a.\,a.\,}x\in\Omega;
				\end{align*}
			\item[(iv)]
				for every $\rho>0$ there exists $\hat{\xi}_\rho>0$ such that the function
				\begin{align*}
					s\to f(x,s)+\hat{\xi}_\rho s^{p(x)-1}
				\end{align*}
				is nondecreasing on $[0,\rho]$ for a.\,a.\,$x\in \Omega$.
		\end{enumerate}
\end{enumerate}

\begin{remark}
	Since we are interested in positive solutions and all the hypotheses above concern the positive semiaxis $\R_+=[0,+\infty)$, we may assume without any loss of generality that $f(x,s)= 0$ for a.\,a.\,$x\in\Omega$ and for all $s \leq 0$. Hypotheses H$_1$(ii), (iii) imply that $f(x,\cdot)$ is $(p_+-1)$-superlinear for a.\,a.\,$x\in\Omega$. However, this superlinearity condition on $f(x,\cdot)$ is not formulated by using the Ambrosetti-Rabinowitz condition which is common in the literature when dealing with superlinear problems, see Byun-Ko \cite{2-Byun-Ko-2017}, Saoudi-Ghanmi \cite{20-Saoudi-Ghanmi-2017} and Saoudi-Kratou-Alsadhan \cite{21-Saoudi-Kratou-Alsadhan-2016}. Here, instead of the Ambrosetti-Rabinowitz condition, we employ hypothesis H$_1$(iii) which is less restrictive and incorporates in our framework nonlinearities with ``slower'' growth near $+\infty$. For example, consider the functions
	\begin{align*}
		f_1(x,s)=(s+1)^{p_+-1}\ln (s+1) +s^{r_1(x)-1} \quad \text{for all }s\geq 0
	\end{align*}
	with $r_1\in E_1$, $r_1(x) \leq p(x)$ for all $x\in\close$ and
	\begin{align*}
		f_2(x,s)=
		\begin{cases}
			s^{\mu(x)-1}&\text{if }0 \leq s \leq 1,\\
			s^{p_+-1}\ln(s)+s^{r_2(x)-1}&\text{if } 1<s
		\end{cases}
	\end{align*}
	with $\mu, r_2\in E_1$ and $r_2(x) \leq p(x)$ for all $x\in\close$. These functions satisfy hypotheses H$_1$, but fail to satisfy the Ambrosetti-Rabinowitz condition, see, for example, Gasi\'nski-Papageorgiou \cite{7-Papageorgiou-Gasinski-2011}.
\end{remark}

The difficulty that we encounter when we study a singular problem is that the energy (Euler) functional of the problem is not $C^1$ because of the presence of the singular term. Hence, we cannot use the results of critical point theory. We need to find a way to bypass the singularity and deal with $C^1$-functionals. In the next section, we examine a purely singular problem and the solution of this problem will help us in bypassing the singularity.

\section{An auxiliary purely singular problem}

In this section we deal with the following purely singular anisotropic $(p,q)$-equation
\begin{align}\label{1}
	-\Delta_{p(\cdot)} u - \Delta_{q(\cdot)} u = u^{-\eta(x)} \quad \text{in }\Omega, \quad u\big|_{\partial\Omega}=0, \quad u>0.
\end{align}

\begin{proposition}\label{proposition_4}
	If hypotheses H$_0$ hold, then problem \eqref{1} admits a unique position solution $\overline{u}\in \interior$.
\end{proposition}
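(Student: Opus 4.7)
The plan is to regularize the singularity and pass to the limit using the $\Ss_+$-property of the leading operator, then handle uniqueness through strict monotonicity. For each $n\in\N$, I would consider the non-singular auxiliary problem
\begin{align*}
-\Delta_{p(\cdot)}u - \Delta_{q(\cdot)}u = \l(u^+ + 1/n\r)^{-\eta(x)}\text{ in }\Omega,\qquad u\big|_{\rand}=0.
\end{align*}
The right-hand side is bounded, so the energy functional on $\Wpzero{p(\cdot)}$ is of class $C^1$, coercive (by Proposition \ref{proposition_1}(d) the principal part controls $\|u\|^{p_-}$, while the primitive of the reaction, whose exponents $1-\eta(x)$ lie in $(0,1)$, grows only sublinearly), and sequentially weakly lower semicontinuous. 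Direct minimization produces $u_n$; testing with $-u_n^-$ forces $u_n\geq 0$, while evaluation at a small multiple of a nonnegative test function shows $u_n\not\equiv 0$, and the strict monotonicity of $u\mapsto A_{p(\cdot)}(u)+A_{q(\cdot)}(u)-(u^++1/n)^{-\eta(\cdot)}$ yields uniqueness.

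Next I would establish monotonicity and uniform bounds. Given $m>n$, testing the difference of the $n$-th and $m$-th equations with $(u_n-u_m)^+\in\Wpzero{p(\cdot)}$ gives a left-hand side which is nonnegative by the monotonicity of $A_{p(\cdot)}+A_{q(\cdot)}$ and a right-hand side which is nonpositive, since on $\{u_n>u_m\}$ one has $u_n+1/n>u_m+1/m$, whence $(u_n+1/n)^{-\eta(x)}<(u_m+1/m)^{-\eta(x)}$. Strict monotonicity forces $u_n\leq u_m$, so the sequence is pointwise nondecreasing and $u_n\geq u_1$ for all $n$. Since $u_1$ satisfies an equation whose right-hand side is in $\Linf$ and is bounded below by a positive constant, the anisotropic regularity theory together with the Hopf-type boundary lemma (cf.\ the framework of Proposition \ref{proposition_3}) places $u_1\in\interior$, so $u_1(x)\geq c\,\dist(x,\rand)$ for some $c>0$. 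Testing the $n$-th equation with $u_n$ itself and using the elementary estimate $(u_n+1/n)^{-\eta(x)}u_n\leq (u_n+1/n)^{1-\eta(x)}\leq 1+u_n+1/n$ yields the uniform bound $\|u_n\|\leq M$ via Proposition \ref{proposition_1}(d) and Poincar\'e.

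Since $\{u_n\}$ is monotone and bounded, $u_n\weak\overline{u}$ in $\Wpzero{p(\cdot)}$ and $u_n\to\overline{u}$ pointwise, with $\overline{u}\geq u_1$. Testing with $u_n-\overline{u}$, the right-hand side is pointwise dominated by $u_1^{-\eta(x)}|u_n-\overline{u}|\lesssim \dist(\cdot,\rand)^{-\eta_+}|u_n-\overline{u}|$; a variable-exponent Hardy inequality combined with strong $\Lp{p(\cdot)}$-convergence forces this to zero in $L^1(\Omega)$. Hence $\lims\lan A_{p(\cdot)}(u_n)+A_{q(\cdot)}(u_n),u_n-\overline{u}\ran\leq 0$, and monotonicity of $A_{q(\cdot)}$ transfers the same bound to $A_{p(\cdot)}$ alone; the $\Ss_+$-property of Proposition \ref{proposition_2} then upgrades convergence to the strong topology of $\Wpzero{p(\cdot)}$. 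Passing to the limit in the equation with test $\varphi\in C^\infty_c(\Omega)$, via dominated convergence with dominant $u_1^{-\eta(x)}|\varphi|$, shows $\overline{u}$ solves \eqref{1}, and anisotropic $\Linf$-estimates together with the Hopf lemma promote $\overline{u}\in\interior$. For uniqueness, if $u,v\in\interior$ both solve \eqref{1}, testing with $u-v$ yields a nonnegative left-hand side by strict monotonicity and a nonpositive right-hand side $\into(u^{-\eta(x)}-v^{-\eta(x)})(u-v)\,dx$ because $s\mapsto s^{-\eta(x)}$ is strictly decreasing, forcing $u=v$. The main obstacle throughout is the control of singular integrals near $\rand$; this hinges entirely on the uniform lower bound $u_n\geq u_1\in\interior$ and a variable-exponent Hardy inequality, without which neither the $\Ss_+$-step nor the uniqueness computation would be well-posed.
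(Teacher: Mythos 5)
Your proposal is correct and reaches the same destination by the same overall architecture as the paper (regularize the singularity, prove monotonicity in the regularization parameter to obtain the uniform lower bound $u_1\in\interior$, control the singular integrals via Hardy's inequality, pass to the limit with the $\Ss_+$-property, and get uniqueness from the strict decrease of $s\mapsto s^{-\eta(x)}$), but two sub-steps are handled differently. First, the paper does not solve the regularized problem by direct minimization: it freezes the argument of the singular term, solves $-\Delta_{p(\cdot)}u-\Delta_{q(\cdot)}u=[|g|+\eps]^{-\eta(x)}$ by surjectivity of the strictly monotone coercive operator $V=A_{p(\cdot)}+A_{q(\cdot)}$, and then applies the Schauder--Tychonov fixed point theorem to the solution map $g\mapsto\beta(g)$. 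Your observation that the problem with right-hand side $(u^++1/n)^{-\eta(x)}$ is itself variational, with a $C^1$, coercive, sequentially weakly lower semicontinuous energy (the primitive grows at most linearly since the regularized nonlinearity is bounded), is valid and shortcuts the fixed-point machinery. Second, the paper proves the monotonicity $\hat u_\eps\le\hat u_{\eps'}$ through an auxiliary truncated functional and identification of its global minimizer, whereas you test the difference of the two equations directly with $(u_n-u_m)^+$; this also works, since on $\{u_n>u_m\}$ one indeed has $u_n+1/n>u_m+1/m$, and it is the more economical argument.

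One step is stated too loosely: the claim that the variable-exponent Hardy inequality ``combined with strong $\Lp{p(\cdot)}$-convergence'' forces $\into u_1^{-\eta(x)}|u_n-\overline u|\,dx\to 0$. Hardy bounds $\into |h|/\hat d\,dx$ by $\|\nabla h\|_{p(\cdot)}$, which for $h=u_n-\overline u$ is only bounded, not vanishing, and a H\"older argument would require $u_1^{-\eta(\cdot)}\in\Lp{p'(\cdot)}$, which the hypotheses on $\eta$ do not guarantee. The fix is immediate from what you have already established: by monotonicity $u_n\uparrow\overline u$ pointwise, so $0\le u_1^{-\eta(x)}(\overline u-u_n)\le u_1^{-\eta(x)}\overline u$, and the latter is in $\Lp{1}$ by Hardy; monotone (or dominated) convergence then gives the conclusion. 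This is essentially the route the paper takes, using the uniform $\Linf$-bound on $\{\hat u_n\}$ and the integrability of $\hat u_1^{-\eta(\cdot)}$.
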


\begin{proof}
	Let $g \in \Lp{p(\cdot)}$ and let $0 < \eps \leq 1$. We consider the following Dirichlet problem
	\begin{align*}
	-\Delta_{p(\cdot)} u - \Delta_{q(\cdot)} u = \l[|g(x)|+\eps\r]^{-\eta(x)} \quad \text{in }\Omega, \quad u\big|_{\partial\Omega}=0, \quad u>0.
	\end{align*}
	Let $V\colon \Wpzero{p(\cdot)}\to \Wpzero{p(\cdot)}^*=W^{-1,p'(\cdot)}(\Omega)$ be the operator defined by
	\begin{align*}
		V(u)=A_{p(\cdot)}(u)+A_{q(\cdot)}(u) \quad\text{ for all } u\in\Wpzero{p(\cdot)}.
	\end{align*}
	This map is continuous and strictly monotone, see Proposition \ref{proposition_2}, hence maximal monotone as well. It is also coercive, see Proposition \ref{proposition_1}. Therefore, it is surjective, see Papageorgiou-R\u{a}dulescu-Repov\v{s} \cite[p.\,135]{13-Papageorgiou-Radulescu-Repovs-2019}. Since $[|g(\cdot)|+\eps]^{-\eta(\cdot)}\in \Linf$, there exists $u_\eps \in \Wpzero{p(\cdot)}, u_\eps\geq 0, u_\eps\neq 0$ such that
	\begin{align*}
		V(u_\eps)=\l[|g|+\eps\r]^{-\eta(\cdot)}.
	\end{align*}
	
	The strict monotonicity of $V$ implies the uniqueness of $u_\eps$. Thus, we can define the map $\beta\colon \Lp{p(\cdot)}\to \Lp{p(\cdot)}$ by setting
	\begin{align*}
		\beta(g)=u_\eps.
	\end{align*}
	Recall that $\Wpzero{p(\cdot)} \hookrightarrow \Lp{p(\cdot)}$ is compactly embedded. We claim that the map $\beta$ is continuous. So, let $g_n\to g$ in $\Lp{p(\cdot)}$ and let $u_\eps^n=\beta(g_n)$ with $n\in\N$. We have
	\begin{align}\label{3}
		\l\lan A_{p(\cdot)}\l(u_\eps^n\r),h\r\ran + \l\lan A_{q(\cdot)}\l(u_\eps^n\r),h\r\ran =\into \frac{h}{\l[|g_n|+\eps\r]^{\eta(x)}}\,dx
	\end{align}
	for all $h\in \Wpzero{p(\cdot)}$ and for all $n\in\N$.
	
	We choose $h=u_\eps^{n}\in \Wpzero{p(\cdot)}$ in \eqref{3} and obtain
	\begin{align*}
		\varrho_{p(\cdot)}\l(\nabla u_\eps^n\r)+\varrho_{p(\cdot)}\l(\nabla u_\eps^n\r) \leq \into \frac{u_\eps^n}{\eps^{\eta_+}}\,dx,
	\end{align*}
	which by Proposition \ref{proposition_1} implies that 
	\begin{align*}
		\l\{u_\eps^n\r\}_{n\in\N} \subseteq \Wpzero{p(\cdot)} \text{ is bounded}.
	\end{align*}
	So, we may assume that
	\begin{align}\label{4}
		u_\eps^n \weak \tilde{u}_\eps \quad\text{in }\Wpzero{p(\cdot)} 
		\quad\text{and}\quad
		u_\eps^n \to \tilde{u}_\eps \quad\text{in }\Lp{p(\cdot)}.
	\end{align}
	Now we choose $h= u_\eps^n-\tilde{u}_\eps \in \Wpzero{p(\cdot)}$ in \eqref{3}, pass to the limit as $n\to \infty$ and apply \eqref{4} which results in
	\begin{align*}
		\lim_{n\to\infty} \l [\l\lan A_{p(\cdot)}\l(u_\eps^n\r),u_\eps^n-\tilde{u}_\eps \r\ran+\l\lan A_{q(\cdot)}\l(u_\eps^n\r),u_\eps^n-\tilde{u}_\eps \r\ran\r]=0.
	\end{align*}
	Since $A_{q(\cdot)}(\cdot)$ is monotone, we have
	\begin{align*}
		\limsup_{n\to\infty} \l [\l\lan A_{p(\cdot)}\l(u_\eps^n\r),u_\eps^n-\tilde{u}_\eps \r\ran+\l\lan A_{q(\cdot)}\l(\tilde{u}_\eps\r),u_\eps^n-\tilde{u}_\eps \r\ran\r]\leq 0.
	\end{align*}
	Applying \eqref{4} gives
	\begin{align*}
		\limsup_{n\to\infty} \l\lan A_{p(\cdot)}\l(u_\eps^n\r),u_\eps^n-\tilde{u}_\eps \r\ran \leq 0
	\end{align*}
	and so, by Proposition \ref{proposition_2},
	\begin{align}\label{5}
		u_\eps^n \to \tilde{u}_\eps \quad \text{in }\Wpzero{p(\cdot)}.
	\end{align}
	Passing  to the limit in \eqref{3} as $n\to \infty$ and using \eqref{5} yields
	\begin{align*}
		\l\lan A_{p(\cdot)}\l(\tilde{u}_\eps\r),h\r\ran + \l\lan A_{q(\cdot)}\l(\tilde{u}_\eps\r),h\r\ran =\into \frac{h}{\l[|g|+\eps\r]^{\eta(x)}}\,dx
	\end{align*}
	for all $h\in \Wpzero{p(\cdot)}$. Hence, $\tilde{u}_\eps =\beta(g)$. 
	
	So, for the original sequence, we have 
	\begin{align*}
		u_\eps^n=\beta(g_n) \to \beta (g) =\tilde{u}_\eps,
	\end{align*}
	which shows that $\beta$ is continuous.
	
	From the argument above and recalling that $\Wpzero{p(\cdot)} \hookrightarrow \Lp{p(\cdot)}$ compactly, we see that $\overline{\beta(\Lp{p(\cdot)}} \subseteq \Lp{p(\cdot)}$ is compact. So, by the Schauder-Tychonov fixed point theorem, see Papageorgiou-R\u{a}dulescu-Repov\v{s} \cite[p.\,298]{13-Papageorgiou-Radulescu-Repovs-2019} we can find $\hat{u}_\eps\in\Wpzero{p(\cdot)}$ such that $\beta(\hat{u}_\eps)=\hat{u}_\eps$.
		
	From Fan-Zhao \cite{5-Fan-Zhao-1999}, see also Gasi\'nski-Papageorgiou \cite{7-Papageorgiou-Gasinski-2011} and Marino-Winkert \cite{10-Marino-Winkert-2020}, we have that $\hat{u}_\eps \in\Linf$. Then, from Tan-Fang \cite[Corollary 3.1]{22-Tan-Fang-2013}, we have $\hat{u}_\eps \in C^1_0(\close)\setminus \{0\}$. Finally, the anisotropic maximum principle of Zhang \cite{24-Zhang-2005}, see also Papageorgiou-Vetro-Vetro \cite{14-Papageorgiou-Vetro-Vetro-2020}, implies that $\hat{u}_\eps\in\interior$.
	
	{\bf Claim:} If $0<\eps'\leq \eps$, then $\hat{u}_\eps \leq \hat{u}_{\eps'}$.
	We have
	\begin{align}\label{6}
		-\Delta_{p(\cdot)} \hat{u}_{\eps'} -\Delta_{q(\cdot)} \hat{u}_{\eps'}= \frac{1}{\l[\hat{u}_{\eps'}+\eps'\r]^{\eta(x)}}
		\geq \frac{1}{\l[\hat{u}_{\eps'}+\eps\r]^{\eta(x)}} \quad\text{in }\Omega.
	\end{align}
	We introduce the Carath\'eodory function $k_\eps\colon \Omega\times\R\to\R$ defined by
	\begin{align}\label{7}
		k_\eps(x,s)=
		\begin{cases}
			\displaystyle\frac{1}{\l[s^++\eps\r]^{\eta(x)}} &\text{if } s \leq \hat{u}_{\eps'}(x),\\[4ex]
			\displaystyle\frac{1}{\l[\hat{u}_{\eps'}(x)+\eps\r]^{\eta(x)}} & \text{if } \hat{u}_{\eps'}(x) < s.
		\end{cases}
	\end{align}
	We set $K_\eps(x,s)=\int^s_0 k_\eps(x,t)\,dt$ and consider the $C^1$-functional $J_\eps\colon \Wpzero{p(\cdot)}\to \R$ defined by
	\begin{align*}
		J_\eps(u)=\into \frac{1}{p(x)} |\nabla u|^{p(x)}\,dx +\into \frac{1}{q(x)} |\nabla u|^{q(x)}\,dx-\into K_\eps(x,u)\,dx
	\end{align*}
	for all $u \in \Wpzero{p(\cdot)}$. From \eqref{7} it is clear that $J_\eps\colon \Wpzero{p(\cdot)}\to \R$ is coercive and by the compact embedding $\Wpzero{p(\cdot)}\hookrightarrow\Lp{r}$ we know that it is also sequentially weakly lower semicontinuous. Therefore, by the Weierstra\ss-Tonelli theorem, there exists $\hat{u}_\eps^*\in\Wpzero{p(\cdot)}$ such that 
	\begin{align}\label{8}
		J_\eps\l(\hat{u}_\eps^*\r)=\min \l[J_\eps(u)\,:\,u \in \Wpzero{p(\cdot)}\r].
	\end{align}
	
	Let $u\in \interior$ and choose $t \in (0,1)$ small enough so that $tu \leq\hat{u}_{\eps'}$, recall that $ \hat{u}_{\eps'}\in\interior$ and use Proposition 4.1.22 of Papageorgiou-R\u{a}dulescu-Repov\v{s} \cite{13-Papageorgiou-Radulescu-Repovs-2019}. Then, by \eqref{7}, we obtain
	\begin{align*}
		J_\eps(tu)
		&\leq \frac{t^{q_-}}{q_-} \l[\varrho_{p(\cdot)}(\nabla u)+\varrho_{q(\cdot)}(\nabla u)\r]
		-\into \frac{1}{1-\eta(x)}(tu)^{1-\eta(x)}\,dx\\
		& \leq c_1t^{q_-}-c_2t^{1-\eta_-}
	\end{align*}
	for some $c_1=c_1(u)>0$, $c_2=c_2(u)>0$ and $t\in (0,1)$. Choosing $t \in (0,1)$ even smaller if necessary, we see that
	\begin{align*}
		J_\eps(tu)<0,
	\end{align*}
	since $1-\eta_-<1<q_-$. Then, by \eqref{8}, because $\hat{u}_\eps^*\in\Wpzero{p(\cdot)}$ is the global minimizer of $J_\eps$, we conclude that
	\begin{align*}
		J_\eps\l(\hat{u}_\eps^*\r)<0=J_\eps(0)
	\end{align*}
	and so $\hat{u}_\eps^*\neq 0$. 
	
	From \eqref{8} we have $J'_\eps\l(\hat{u}_\eps^*\r)=0$ which means 
	\begin{align}\label{9}
		\l\lan A_{p(\cdot)}\l(\hat{u}_\eps^*\r),h\r\ran+\l\lan A_{q(\cdot)}\l(\hat{u}_\eps^*\r),h\r\ran=\into k_\eps \l(x,\hat{u}_\eps^*\r)\,dx
	\end{align}
	for all $h\in \Wpzero{p(\cdot)}$. Testing \eqref{9} with $h=-\l(\hat{u}_\eps^*\r)^-\in\Wpzero{p(\cdot)}$ we obtain
	\begin{align*}
		\varrho_{p(\cdot)}\l(\nabla \l(\hat{u}_\eps^*\r)^-\r) \leq 0,
	\end{align*}
	because of \eqref{7} which by Proposition \ref{proposition_1} implies that
	\begin{align*}
		\hat{u}_\eps^* \geq 0 \quad\text{and}\quad \hat{u}_\eps^*\neq 0.
	\end{align*}
	
	Now we choose $h= \l(\hat{u}_\eps^*-\hat{u}_{\eps'}\r)^+ \in\Wpzero{p(\cdot)}$ in \eqref{9}. Applying \eqref{7} and \eqref{6} gives
	\begin{align*}
		&\l\lan A_{p(\cdot)}\l(\hat{u}_\eps^*\r),\l(\hat{u}_\eps^*-\hat{u}_{\eps'}\r)^+\r\ran+\l\lan A_{q(\cdot)}\l(\hat{u}_\eps^*\r),\l(\hat{u}_\eps^*-\hat{u}_{\eps'}\r)^+\r\ran\\
		&=\into \frac{1}{\l[\hat{u}_{\eps'}+\eps\r]^{\eta(x)}} \l(\hat{u}_\eps^*-\hat{u}_{\eps'}\r)^+\,dx\\
		& \leq \l\lan A_{p(\cdot)}\l(\hat{u}_{\eps'}\r),\l(\hat{u}_\eps^*-\hat{u}_{\eps'}\r)^+\r\ran+\l\lan A_{q(\cdot)}\l(\hat{u}_{\eps'}\r),\l(\hat{u}_\eps^*-\hat{u}_{\eps'}\r)^+\r\ran.
	\end{align*}
	Hence, $\hat{u}_\eps^* \leq \hat{u}_{\eps'}$. So we have proved that
	\begin{align}\label{10}
		\hat{u}_\eps^* \in \l[0,\hat{u}_{\eps'}\r], \quad \hat{u}_\eps^* \neq 0.
	\end{align}
	
	From \eqref{10}, \eqref{7}, \eqref{9} and the first part of the proof we infer that $\hat{u}_\eps^*=\hat{u}_{\eps'}$ and so, by \eqref{10}, $\hat{u}_\eps \leq \hat{u}_{\eps'}$. This proves the Claim.
	
	Next we will let $\eps \to 0^+$ to produce a solution of the purely singular problem \eqref{1}. To this end, let $\eps_n\to 0^+$ and set $\hat{u}_n=\hat{u}_{\eps_n}$ for all $n \in \N$. We have
	\begin{align}\label{11}
		\l\lan A_{p(\cdot)}\l(\hat{u}_n\r),h\r\ran+\l\lan A_{q(\cdot)}\l(\hat{u}_n\r),h\r\ran
		=\into \frac{h}{\l[\hat{u}_n+\eps_n\r]^{\eta(x)}}\,dx
	\end{align}
	for all $h \in \Wpzero{p(\cdot)}$ and for all $n \in \N$. Choosing $h= \hat{u}_n\in\Wpzero{p(\cdot)}$ leads to
	\begin{align*}
		\varrho_{p(\cdot)}\l(\nabla \hat{u}_n\r) \leq \into \hat{u}_n^{1-\eta(x)}\,dx \quad\text{for all }n \in \N.
	\end{align*}
	Therefore, $\{\hat{u}_n\}_{n\in\N}\subseteq \Wpzero{p(\cdot)}$ is bounded.
	
	By passing to an appropriate subsequence if necessary, we may assume that
	\begin{align}\label{12}
		\hat{u}_n\weak \overline{u} \quad\text{in }\Wpzero{p(\cdot)}
		\quad\text{and}\quad
		\hat{u}_n\to \overline{u} \quad\text{in }\Lp{p(\cdot)}.
	\end{align}
	Now we choose $h= \hat{u}_n-\overline{u}\in\Wpzero{p(\cdot)}$. This yields
	\begin{align*}
		\begin{split}
			&\l\lan A_{p(\cdot)}\l(\hat{u}_n\r),\hat{u}_n-\overline{u}\r\ran+\l\lan A_{q(\cdot)}\l(\hat{u}_n\r),\hat{u}_n-\overline{u}\r\ran\\
			& = \into \frac{\hat{u}_n-\overline{u}}{\l[\hat{u}_n+\eps_n\r]^{\eta(x)}}\,dx
			 \leq  \into \frac{\hat{u}_n-\overline{u}}{\hat{u}_1^{\eta(x)}}\,dx\quad\text{for all }n\in \N,
		\end{split}
	\end{align*}
	due to the Claim. 
	
	Let $\hat{d}(x)=\dist(x,\partial\Omega)$ for all $x\in\close$. Using Lemma 14.16 of Gilbarg-Trudinger \cite[p.\,355]{8-Gilbarg-Trudinger-1998} we have that $\hat{d}\in\interior$. We can find $c_3>0$ such that $c_3\hat{d} \leq \hat{u}_1$, see Papageorgiou-R\u{a}dulescu-Repov\v{s} \cite[p.\,274]{12-Papageorgiou-Radulescu-Repovs-2020}. Then we have for all $h \in \Wpzero{p(\cdot)}$ that
	\begin{align*}
		\l|\into \frac{h}{\hat{u}_1^{\eta(x)}}\,dx\r| \leq c_4 \into \frac{|h|}{\hat{d}}\,dx \leq c_5 \|\nabla h\|_{p(\cdot)}
	\end{align*}
	for some $c_4,c_5>0$. Here we used the anisotropic Hardy inequality of Harjulehto-H\"{a}st\"{o}-Koskenoja \cite{Harjulehto-Hasto-Koskenoja-2005}. From Marino-Winkert \cite{10-Marino-Winkert-2020} (see also Ragusa-Tachikawa \cite{19-Ragusa-Tachikawa-2020}) we have that $\{\hat{u}_n\}_{n\in\N}\subseteq \Linf$ is bounded. Moreover by the lemma and its proof of Lazer-McKenna \cite{9-Lazer-McKenna-1991} we know that $\hat{u}_1^{-\eta(\cdot)}\in \Lp{1}$. So, from \eqref{12} and the dominated convergence theorem, it follows that
	\begin{align*}
		\into \frac{\hat{u}_n-\overline{u}}{\hat{u}_1^{\eta(x)}}\,dx \longrightarrow 0 \quad \text{as }n\to\infty.
	\end{align*}
	This implies
	\begin{align*}
		\limsup_{n\to\infty} \l[\l\lan A_{p(\cdot)}\l(\hat{u}_n\r),\hat{u}_n-\overline{u}\r\ran+\l\lan A_{q(\cdot)}\l(\hat{u}_n\r),\hat{u}_n-\overline{u}\r\ran\r] \leq 0,
	\end{align*}
	which by the monotonicity of $A_{q(\cdot)}$ and the $\Ss_+$-property of $A_{p(\cdot)}$ (see Proposition \ref{proposition_2} and the first part of the proof) leads to
	\begin{align}\label{16}
		\hat{u}_n \to \overline{u} \quad\text{in }\Wpzero{p(\cdot)} \quad\text{and}\quad \hat{u}_1\leq \overline{u}.
	\end{align}
	
	So, if we pass to the limit in \eqref{11} as $n\to \infty$ and use the Lebesgue dominated convergence theorem, we then obtain
	\begin{align*}
		\l\lan A_{p(\cdot)}\l(\overline{u}\r),h\r\ran
		+\l\lan A_{q(\cdot)}\l(\overline{u}\r),h\r\ran
		= \into \frac{h}{\overline{u}^{\eta(x)}}\,dx\quad\text{for all }h\in\Wpzero{p(\cdot)}.
	\end{align*}
	Since $\hat{u}_1\leq \overline{u}$, we see that $\overline{u}\in\Wpzero{p(\cdot)}$ is a positive solution of \eqref{1}. From Marino-Winkert \cite{10-Marino-Winkert-2020} we know that $\overline{u} \in \Linf$ and so we conclude that $\overline{u} \in \interior$, see Zhang \cite{24-Zhang-2005} and \eqref{16}.
	
	Finally, note that the function $\overset{\circ}{\R}_+\ni s \to s^{-\eta(x)}$ is strictly decreasing. Therefore, the positive solution $\overline{u}\in \interior$ is unique.
\end{proof}

In the next section we will use this solution to bypass the singularity and deal with $C^1$-functionals on which we can apply the results of critical point theory.

\section{Positive solutions}

We introduce the following two sets
\begin{align*}
	\mathcal{L}&=\left\{\lambda>0: \text{problem \eqref{problem} has a positive solution}\right\},\\
	\mathcal{S}_\lambda&=\left\{u: u\text{ is a positive solution of problem \eqref{problem}}\right\}.
\end{align*}

\begin{proposition}\label{proposition_5}
	If hypotheses H$_0$ and H$_1$ hold, then $\mathcal{L}\neq \emptyset$.
\end{proposition}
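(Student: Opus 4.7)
The plan is to exhibit, for all sufficiently small $\lambda>0$, a positive solution of \eqref{problem}, hence $\mathcal{L}\ne\emptyset$. The unique positive solution $\overline{u}\in\interior$ of the purely singular problem from Proposition~\ref{proposition_4} will play a double role: it serves as a subsolution of \eqref{problem} and as a device to bypass the singular reaction term.

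First, I would bypass the singular term by a Carath\'eodory truncation
\begin{align*}
\hat{f}_\lambda(x,s)=\begin{cases}\overline{u}(x)^{-\eta(x)}+\lambda f(x,\overline{u}(x))&\text{if }s\le\overline{u}(x),\\ s^{-\eta(x)}+\lambda f(x,s)&\text{if }s>\overline{u}(x),\end{cases}
\end{align*}
with primitive $\hat{F}_\lambda(x,s)=\int_0^s\hat{f}_\lambda(x,t)\,dt$. Because $\overline{u}\in\interior$, the anisotropic Hardy inequality of Harjulehto-H\"{a}st\"{o}-Koskenoja and the Lazer-McKenna type bound already used in Proposition~\ref{proposition_4} show that the associated energy functional
\begin{align*}
\hat{\psi}_\lambda(u)=\into\frac{1}{p(x)}|\nabla u|^{p(x)}\,dx+\into\frac{1}{q(x)}|\nabla u|^{q(x)}\,dx-\into\hat{F}_\lambda(x,u)\,dx
\end{align*}
is well defined and of class $C^1$ on $\Wpzero{p(\cdot)}$, opening the door to variational methods.

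Since H$_1$(ii) makes $\hat{\psi}_\lambda$ unbounded below, I would pair the truncation from below by $\overline{u}$ with a truncation from above by a supersolution. Concretely, for $\lambda\in(0,\lambda_0)$ with $\lambda_0$ small, I would produce $\bar{w}_\lambda\in\interior$ with $\bar{w}_\lambda\ge\overline{u}$ satisfying
\begin{align*}
-\Delta_{p(\cdot)}\bar{w}_\lambda-\Delta_{q(\cdot)}\bar{w}_\lambda\ge\bar{w}_\lambda^{-\eta(x)}+\lambda f(x,\bar{w}_\lambda)\quad\text{in }\Omega,
\end{align*}
for instance as a suitably scaled multiple $t_\lambda\phi$ of a fixed $\phi\in\interior$, with $t_\lambda$ balanced against $\lambda$ using H$_1$(i) and $p_+<r<p_-^*$. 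Truncating $\hat{f}_\lambda$ from above by $\bar{w}_\lambda$ yields a bounded Carath\'eodory reaction, so the modified functional $\tilde{\psi}_\lambda$ is coercive and sequentially weakly lower semicontinuous on $\Wpzero{p(\cdot)}$, and the Weierstrass-Tonelli theorem gives a global minimizer $u_\lambda$. Standard test function arguments with $(\overline{u}-u_\lambda)^+$ and $(u_\lambda-\bar{w}_\lambda)^+$ in $\tilde{\psi}_\lambda'(u_\lambda)=0$, supported by Proposition~\ref{proposition_3}, then show $\overline{u}\le u_\lambda\le\bar{w}_\lambda$. Hence both truncations are inactive and $u_\lambda$ solves \eqref{problem}. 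The regularity theorems of Marino-Winkert and Tan-Fang, combined with the anisotropic maximum principle of Zhang used already in Proposition~\ref{proposition_4}, finally force $u_\lambda\in\interior$, so $\lambda\in\mathcal{L}$.

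The main obstacle is the construction of the supersolution $\bar{w}_\lambda$: because $-\Delta_{p(\cdot)}-\Delta_{q(\cdot)}$ is neither homogeneous nor isotropic, the simple rescaling that works in the constant-exponent case of \cite{15-Papageorgiou-Winkert-2019} is not available. The exponent condition $p_+<r<p_-^*$ from H$_1$(i) is precisely what enables a two-parameter choice of $(t_\lambda,\lambda)$ that closes the supersolution inequality for all $\lambda$ in a neighborhood of $0$, so that $\mathcal{L}$ contains at least an interval $(0,\lambda_0)$.
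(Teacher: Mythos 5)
Your overall architecture --- trap a solution between the subsolution $\overline{u}$ from Proposition \ref{proposition_4} and a supersolution via a two-sided truncation, minimize the resulting coercive $C^1$ functional, and peel off the truncations with the test functions $(\overline{u}-u_\lambda)^+$ and $(u_\lambda-\bar w_\lambda)^+$ --- is exactly the paper's strategy. But the one step you flag as ``the main obstacle,'' the construction of the supersolution, is left unresolved, and the route you sketch would in fact fail. A scaled multiple $t_\lambda\phi$ of a fixed $\phi\in\interior$ cannot satisfy
\begin{align*}
-\Delta_{p(\cdot)}(t_\lambda\phi)-\Delta_{q(\cdot)}(t_\lambda\phi)\ \ge\ (t_\lambda\phi)^{-\eta(x)}+\lambda f(x,t_\lambda\phi)\quad\text{in }\Omega,
\end{align*}
because near $\partial\Omega$ the left-hand side stays bounded (for smooth $\phi$ with $|\nabla\phi|>0$ on the boundary) while $(t_\lambda\phi)^{-\eta(x)}\sim (t_\lambda c\,\hat d(x))^{-\eta(x)}\to+\infty$. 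No choice of $(t_\lambda,\lambda)$, and no appeal to $p_+<r<p_-^*$, repairs this: the difficulty is the singular term at the boundary, not the superlinear growth of $f$. The condition $p_+<r<p_-^*$ plays its role later, in the Cerami-condition/mountain-pass analysis, not here.

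The paper's resolution is to build the supersolution by solving another auxiliary problem rather than by scaling: let $\tilde u\in\interior$ be the unique solution of $-\Delta_{p(\cdot)}u-\Delta_{q(\cdot)}u=\overline{u}^{-\eta(x)}+1$ (surjectivity and strict monotonicity of $V=A_{p(\cdot)}+A_{q(\cdot)}$, plus the regularity of Saoudi--Ghanmi since $\overline{u}^{-\eta(\cdot)}\le c\,\hat d^{-\eta(\cdot)}$). The weak comparison principle gives $\overline{u}\le\tilde u$, hence $\tilde u^{-\eta(x)}\le\overline{u}^{-\eta(x)}$, so the singular part of the supersolution inequality is automatic; the parametric part reduces to $\lambda f(x,\tilde u)\le 1$, which holds for all $\lambda\le\lambda_0:=1/\|N_f(\tilde u)\|_\infty$ by H$_1$(i). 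With $\tilde u$ in hand, the rest of your argument (truncation between $\overline{u}$ and $\tilde u$, minimization, ordering of the minimizer) goes through as you describe and yields $(0,\lambda_0]\subseteq\mathcal{L}$. You should either adopt this construction or supply a genuinely different supersolution that dominates its own singular term near the boundary; as written, the proposal has a gap at its acknowledged critical point.
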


\begin{proof}
	Let $\overline{u} \in \interior$ be the unique positive solution of problem \eqref{1}, see Proposition \ref{proposition_4}. By the anisotropic Hardy inequality, see Harjulehto-H\"{a}st\"{o}-Koskenoja \cite{Harjulehto-Hasto-Koskenoja-2005}, we know that $\overline{u}^{-\eta(\cdot)}h\in\Lp{1}$ for all $h\in \Wpzero{p(\cdot)}$. Hence, $\overline{u}^{-\eta(\cdot)}\in W^{1,p'(\cdot)}(\Omega)=\Wpzero{p(\cdot)}^*$.

	We consider the following auxiliary Dirichlet problem
	\begin{align}\label{17}
		-\Delta_{p(\cdot)} u - \Delta_{q(\cdot)} u = \overline{u}^{-\eta(x)}+1 \quad \text{in }\Omega, \quad u\big|_{\partial\Omega}=0, \quad u>0.
	\end{align}
	As in the proof of Proposition \ref{proposition_4}, exploiting the surjectivity and the strict monotonicity of the operator $V$, we infer that problem \eqref{17} admits a unique positive solution $\tilde{u} \in\Wpzero{p(\cdot)}$.
	
	Since $\overline{u}^{-\eta(\cdot)}\leq c_6 \hat{d}^{-\eta(\cdot)}$ for some $c_6>0$, from Theorem B.1 of Saoudi-Ghanmi \cite{20-Saoudi-Ghanmi-2017} we have 
	\begin{align*}
		\tilde{u} \in \interior.
	\end{align*}
	
	From the weak comparison principle, see Tolksdorf \cite{23-Tolksdorf-1983}, we have that
	\begin{align}\label{18}
		\overline{u} \leq \tilde{u}.
	\end{align}
	
	Let $\lambda_0 =\frac{1}{\|N_f(\tilde{u})\|_\infty}$, see hypothesis H$_1$(i). For $\lambda \in (0,\lambda_0]$ we have that
	\begin{align}\label{19}
		\lambda f\l(x,\tilde{u}(x)\r) \leq 1 \quad\text{for a.\,a.\,}x\in\Omega.
	\end{align}
	Applying \eqref{18} and \eqref{19} we get
	\begin{align}\label{20}
		-\Delta_{p(\cdot)}\tilde{u}-\Delta_{q(\cdot)} \tilde{u}
		=\overline{u}^{-\eta(x)}+1
		\geq \tilde{u}^{-\eta(x)}+\lambda f\l(x,\tilde{u}(x)\r)\quad \text{in }\Omega.
	\end{align}
	
	We introduce the Carath\'eodory function $i_\lambda\colon\Omega\times\overset{\circ}{\R}_+\to\overset{\circ}{\R}_+$ defined by
	\begin{align}\label{21}
		i_\lambda(x,s)=
		\begin{cases}
			\overline{u}(x)^{-\eta(x)}+\lambda f\l(x,\overline{u}(x)\r)&\text{if } s<\overline{u}(x),\\
			s^{-\eta(x)}+\lambda f\l(x,s\r)&\text{if } \overline{u}(x)\leq s\leq \tilde{u}(x),\\
			\tilde{u}(x)^{-\eta(x)}+\lambda f\l(x,\tilde{u}(x)\r)&\text{if } \tilde{u}(x)<s.
		\end{cases}
	\end{align}
	We set $I_\lambda(x,s)=\int^s_0i_\lambda(x,t)\,dt$ and consider the $C^1$-functional $\psi_\lambda\colon \Wpzero{p(\cdot)}\to\R$ defined by
	\begin{align*}
		\psi_\lambda(u)=\into \frac{1}{p(x)}|\nabla u|^{p(x)}\,dx+\into \frac{1}{q(x)}|\nabla u|^{q(x)}\,dx-\into I_\lambda(x,u)\,dx
	\end{align*}
	for all $u \in \Wpzero{p(\cdot)}$. Evidently, $\psi_\lambda$ is coercive due to \eqref{21} and it is sequentially weakly lower semicontinuous. So, we can find $u_\lambda \in \Wpzero{p(\cdot)}$ such that
	\begin{align*}
		\psi_\lambda(u_\lambda)=\min\l[\psi_\lambda(u)\,:\, u \in \Wpzero{p(\cdot)}\r].
	\end{align*}
	From this we know that $\psi'_\lambda(u_\lambda)=0$ and so,
	\begin{align}\label{22}
		\l \lan A_{p(\cdot)}\l(u_\lambda\r),h\r\ran+\l \lan A_{q(\cdot)}\l(u_\lambda\r),h\r\ran=\into i_\lambda \l(x,u_\lambda\r)h\,dx
	\end{align}
	for all $h \in \Wpzero{p(\cdot)}$. First we choose $h=\l(\overline{u}-u_\lambda\r)^+ \in \Wpzero{p(\cdot)}$ in \eqref{22}. Then, by \eqref{21}, $f \geq 0$ and Proposition \ref{proposition_4} it follows that
	\begin{align*}
		&\l \lan A_{p(\cdot)}\l(u_\lambda\r),\l(\overline{u}-u_\lambda\r)^+\r\ran+\l \lan A_{q(\cdot)}\l(u_\lambda\r),\l(\overline{u}-u_\lambda\r)^+\r\ran\\
		&=\into i_\lambda \l(x,u_\lambda\r)\l(\overline{u}-u_\lambda\r)^+\,dx\\
		&=\into \l[\overline{u}^{-\eta(x)}+\lambda f\l(x,\overline{u}\r)\r] \l(\overline{u}-u_\lambda\r)^+\,dx\\
		&\geq \into \overline{u}^{-\eta(x)}\l(\overline{u}-u_\lambda\r)^+\,dx\\
		&=\l \lan A_{p(\cdot)}\l(\overline{u}\r),\l(\overline{u}-u_\lambda\r)^+\r\ran+\l \lan A_{q(\cdot)}\l(\overline{u}\r),\l(\overline{u}-u_\lambda\r)^+\r\ran.
	\end{align*}
	Therefore, $\overline{u} \leq u_\lambda$.
	
	Next, we test \eqref{22} with $h=\l(u_\lambda-\tilde{u}\r)^+ \in \Wpzero{p(\cdot)}$. As before, by \eqref{21} and \eqref{20}, we have
	\begin{align*}
		&\l \lan A_{p(\cdot)}\l(u_\lambda\r),\l(u_\lambda-\tilde{u}\r)^+\r\ran+\l \lan A_{q(\cdot)}\l(u_\lambda\r),\l(u_\lambda-\tilde{u}\r)^+\r\ran\\
		&=\into i_\lambda \l(x,u_\lambda\r)\l(u_\lambda-\tilde{u}\r)^+\,dx\\
		&=\into \l[\tilde{u}^{-\eta(x)}+\lambda f\l(x,\tilde{u}\r)\r] \l(u_\lambda-\tilde{u}\r)^+\,dx\\
		&\leq \l \lan A_{p(\cdot)}\l(\tilde{u}\r),\l(u_\lambda-\tilde{u}\r)^+\r\ran+\l \lan A_{q(\cdot)}\l(\tilde{u}\r),\l(u_\lambda-\tilde{u}\r)^+\r\ran.
	\end{align*}
	Hence, $u_\lambda \le \tilde{u}$. So, we have proved that
	\begin{align}\label{23}
		u_\lambda\in \l[\overline{u},\tilde{u}\r].
	\end{align}
	Then, from \eqref{23}, \eqref{21} and \eqref{22}, it follows that
	\begin{align*}
		u_\lambda \in \mathcal{S}_\lambda \quad\text{for all }\lambda \in \l(0,\lambda_0\r].
	\end{align*}
	Thus, $\l(0,\lambda_0\r] \subseteq\mathcal{L}\neq \emptyset$.
\end{proof}

We want to determine the regularity of the elements of the solution set $\mathcal{S}_\lambda$. To this end, we first establish a lower bound for the elements of $\mathcal{S}_\lambda$.

\begin{proposition}\label{proposition_6}
	If hypotheses H$_0$, H$_1$ hold and $\lambda \in \mathcal{L}$, then $\overline{u}\leq u$ for all $u \in \mathcal{S}_\lambda$.
\end{proposition}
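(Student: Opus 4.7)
The strategy is a direct testing/comparison argument, exploiting the fact that any $u\in\mathcal{S}_\lambda$ is a (weak) supersolution of the purely singular problem \eqref{1} because $\lambda f(x,u)\geq 0$. The plan is to subtract the weak formulations for $\overline{u}$ and for $u$ when tested against $h=(\overline{u}-u)^+\in\Wpzero{p(\cdot)}$ and derive a contradiction unless $(\overline{u}-u)^+\equiv 0$.

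\textbf{Step 1 (Admissibility of the test function).} Fix $\lambda\in\mathcal{L}$ and $u\in\mathcal{S}_\lambda$. Since $\overline{u}\in\interior\subseteq\Wpzero{p(\cdot)}\cap\Linf$ (Proposition \ref{proposition_4}) and $u\in\Wpzero{p(\cdot)}$ with $u>0$, the function $h=(\overline{u}-u)^+$ lies in $\Wpzero{p(\cdot)}$, is nonnegative and bounded. I need the two singular integrals $\into\overline{u}^{-\eta(x)}h\,dx$ and $\into u^{-\eta(x)}h\,dx$ to be finite. The first follows from the bound $\overline{u}\geq c\hat{d}$ (valid since $\overline{u}\in\interior$, with $\hat{d}(x)=\dist(x,\partial\Omega)$) together with the anisotropic Hardy inequality of Harjulehto--H\"asto--Koskenoja, exactly as done in Proposition \ref{proposition_5}. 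The second is guaranteed by the very fact that $u$ is a positive weak solution of \eqref{problem}: since $h\geq 0$ and the left-hand side of the weak equation is finite for every $\Wpzero{p(\cdot)}$-test, the nonnegative integrand $u^{-\eta(x)}h$ must be in $L^1(\Omega)$.

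\textbf{Step 2 (Subtracting the equations).} Testing the weak formulation of \eqref{1} for $\overline{u}$ and of \eqref{problem} for $u$ against $h=(\overline{u}-u)^+$ and subtracting gives
\begin{align*}
&\l\lan A_{p(\cdot)}(\overline{u})-A_{p(\cdot)}(u),h\r\ran+\l\lan A_{q(\cdot)}(\overline{u})-A_{q(\cdot)}(u),h\r\ran\\
&\qquad=\into\l[\overline{u}^{-\eta(x)}-u^{-\eta(x)}\r]h\,dx-\lambda\into f(x,u)h\,dx.
\end{align*}
On the support $\{\overline{u}>u\}$ of $h$, the map $s\mapsto s^{-\eta(x)}$ is strictly decreasing, so the first bracket is nonpositive; the second term is nonpositive because $f\geq 0$ and $\lambda>0$. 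Hence the right-hand side is $\leq 0$. On the other hand, by the monotonicity of $A_{p(\cdot)}$ and $A_{q(\cdot)}$ (Proposition \ref{proposition_2}), the left-hand side is $\geq 0$.

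\textbf{Step 3 (Conclusion).} Both sides must vanish. Using the strict monotonicity of $A_{p(\cdot)}$, equality forces $\nabla\overline{u}=\nabla u$ a.e.\ on $\{\overline{u}>u\}$, so $\nabla h=0$ a.e.\ in $\Omega$. Since $h\in\Wpzero{p(\cdot)}$, the Poincar\'e inequality yields $h\equiv 0$, i.e., $\overline{u}\leq u$ a.e.\ in $\Omega$, as claimed.

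The main obstacle is the justification of Step 1, specifically the integrability of $u^{-\eta(x)}(\overline{u}-u)^+$, since at this stage we have no regularity for $u$ beyond $u\in\Wpzero{p(\cdot)}$ and no a priori lower bound for $u$; this is handled as indicated above by reading off integrability from the weak formulation itself, since both terms on the right-hand side of \eqref{problem} are nonnegative.
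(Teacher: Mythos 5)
Your argument is correct, but it takes a genuinely different (and more direct) route than the paper. The paper does not subtract the two equations; instead it introduces the truncated Carath\'eodory function $b(x,s)$ (equal to $s^{-\eta(x)}$ for $0<s<u(x)$ and frozen at $u(x)^{-\eta(x)}$ above), solves the auxiliary problem $-\Delta_{p(\cdot)}v-\Delta_{q(\cdot)}v=b(x,v)$ by the same approximation/fixed-point scheme as in Proposition \ref{proposition_4}, shows its solution $\overline{u}_0$ satisfies $\overline{u}_0\leq u$ by testing with $(\overline{u}_0-u)^+$, and then observes that $\overline{u}_0\leq u$ forces $b(x,\overline{u}_0)=\overline{u}_0^{-\eta(x)}$, so $\overline{u}_0$ solves \eqref{1} and equals $\overline{u}$ by the uniqueness part of Proposition \ref{proposition_4}. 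Your direct comparison avoids re-running the existence machinery: once both sides of the subtracted identity are signed, you get $\langle A_{p(\cdot)}(\overline{u})-A_{p(\cdot)}(u),(\overline{u}-u)^+\rangle=0$ and conclude; in fact you could conclude even faster from the right-hand side, since the integrand $[\overline{u}^{-\eta(x)}-u^{-\eta(x)}](\overline{u}-u)^+$ is strictly negative on $\{\overline{u}>u\}$, so that set must be null without invoking strict monotonicity of the gradient terms. The only point to make airtight is the one you already isolated in Step 1: the admissibility of $(\overline{u}-u)^+$ as a test function in the equation for $u$, i.e.\ $u^{-\eta(\cdot)}(\overline{u}-u)^+\in L^1(\Omega)$. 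Your justification (``read it off from the weak formulation'') is slightly circular as phrased, since the weak formulation for that $h$ only makes sense once the integral does; the clean statement is that the definition of a weak solution of the singular problem includes $u^{-\eta(\cdot)}h\in L^1(\Omega)$ for all $h\in\Wpzero{p(\cdot)}$, which is the convention the paper itself uses implicitly when it tests the equation for $u$ with $(\overline{u}_0-u)^+$. With that understanding, both proofs stand on the same footing, and yours is the shorter of the two.
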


\begin{proof}
	Let $u \in \mathcal{S}_\lambda$. We introduce the Carath\'eodory function $b\colon \Omega \times \overset{\circ}{\R}_+ \to \overset{\circ}{\R}_+$ defined by
	\begin{align}\label{24}
		b(x,s)=
		\begin{cases}
			s^{-\eta(x)}&\text{if }0<s<u(x),\\
			u(x)^{-\eta(x)}&\text{if } u(x)<s.
		\end{cases}
	\end{align}
	
	We consider the following Dirichlet problem
	\begin{align}\label{25}
		-\Delta_{p(\cdot)} u - \Delta_{q(\cdot)} u = b(x,u) \quad \text{in }\Omega, \quad u\big|_{\partial\Omega}=0, \quad u>0.
	\end{align}
	As in the proof of Proposition \ref{proposition_4}, using approximations and fixed point theory, we can show that problem \eqref{25} has a positive solution $\overline{u}_0 \in \Wpzero{p(\cdot)}$. Applying \eqref{24}, $f\geq 0$ and $u \in \mathcal{S}_\lambda$ yields
	\begin{align*}
		&\l \lan A_{p(\cdot)}\l(\overline{u}_0\r),\l(\overline{u}_0-u\r)^+\r\ran+\l \lan A_{q(\cdot)}\l(\overline{u}_0\r),\l(\overline{u}_0-u\r)^+\r\ran\\
		&=\into b\l(x,\overline{u}_0\r)\l(\overline{u}_0-u\r)^+\,dx\\
		&= \into u^{-\eta(x)} \l(\overline{u}_0-u\r)^+\,dx\\
		& \leq \into \l[u^{-\eta(x)}+\lambda f(x,u)\r] \l(\overline{u}_0-u\r)^+\,dx\\
		&=\l \lan A_{p(\cdot)}\l(u\r),\l(\overline{u}_0-u\r)^+\r\ran+\l \lan A_{q(\cdot)}\l(u\r),\l(\overline{u}_0-u\r)^+\r\ran.
	\end{align*}
	Therefore, we have
	\begin{align}\label{26}
		\overline{u}_0 \leq u.
	\end{align}
	Then, \eqref{26}, \eqref{24}, \eqref{25} and Proposition \ref{proposition_4} imply that 
	\begin{align*}
		\overline{u}_0=\overline{u} \in \interior.
	\end{align*}
	This shows that $\overline{u} \leq u$ for all $u \in \mathcal{S}_\lambda$, see \eqref{26}.
\end{proof}

Using this lower bound and the anisotropic regularity theory of Saoudi-Ghanmi \cite{20-Saoudi-Ghanmi-2017}, we can have the regularity properties of the elements of $\mathcal{S}_\lambda$.

\begin{proposition}\label{proposition_7}
	If hypotheses H$_0$, H$_1$ hold and $\lambda\in \mathcal{L}$, then $\emptyset\neq \mathcal{S}_\lambda \subseteq \interior$.
\end{proposition}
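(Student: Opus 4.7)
Since $\lambda\in\mathcal{L}$, the set $\mathcal{S}_\lambda$ is nonempty, so fix an arbitrary $u\in\mathcal{S}_\lambda$; the task is to show that $u$ lies in $\interior$. The plan is to upgrade the weak positive solution step by step: first an $L^\infty$ bound, then $C^{1,\alpha}$ regularity, and finally the strict positivity and boundary behaviour needed for membership in $\interior$. The crucial ingredient that makes the singular right-hand side tractable is the lower bound provided by Proposition~\ref{proposition_6}.

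First I would invoke Proposition~\ref{proposition_6} to write $\overline{u}\le u$ in $\Omega$. Because $\overline{u}\in\interior$ (Proposition~\ref{proposition_4}), Proposition~4.1.22 of Papageorgiou--R\u{a}dulescu--Repov\v{s} yields a constant $c_7>0$ with $\overline{u}\ge c_7\hat{d}$, where $\hat{d}(x)=\dist(x,\rand)$. Hence
\begin{align*}
0\le u(x)^{-\eta(x)} \le \overline{u}(x)^{-\eta(x)} \le c_8\,\hat{d}(x)^{-\eta(x)} \quad\text{for a.\,a.\,}x\in\Omega,
\end{align*}
which controls the singular term by the type of weight already handled in the proof of Proposition~\ref{proposition_4}.

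Next I would establish $u\in\Linf$. Because $u\in\mathcal{S}_\lambda$ satisfies
\begin{align*}
-\Delta_{p(\cdot)}u-\Delta_{q(\cdot)}u = u^{-\eta(x)} + \lambda f(x,u) \quad\text{in }\Omega,
\end{align*}
and the singular part is dominated by $c_8\hat{d}^{-\eta(\cdot)}$ while the perturbation $\lambda f(x,u)$ satisfies the growth estimate of H$_1$(i), the boundedness result of Marino--Winkert \cite{10-Marino-Winkert-2020} (the same reference used at the end of the proof of Proposition~\ref{proposition_4}) applies and delivers $u\in\Linf$. With $u$ bounded, hypothesis H$_1$(i) makes the full right-hand side of the form $\overline{u}^{-\eta(\cdot)}+h(x)$ with $h\in\Linf$ and $\overline{u}^{-\eta(\cdot)}\le c_8\hat{d}^{-\eta(\cdot)}$.

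Finally, with this structure of the right-hand side at hand, I would apply the anisotropic boundary regularity theorem of Saoudi--Ghanmi \cite{20-Saoudi-Ghanmi-2017} (Theorem~B.1), which was already used in the proof of Proposition~\ref{proposition_5} for exactly this type of right-hand side, to conclude that $u\in C^1_0(\close)$. A final application of the anisotropic strong maximum principle of Zhang \cite{24-Zhang-2005} (again, just as in the last step of the proof of Proposition~\ref{proposition_4}) gives $u(x)>0$ in $\Omega$ and $\tfrac{\partial u}{\partial n}\big|_{\rand}<0$, i.e., $u\in\interior$. Since $u\in\mathcal{S}_\lambda$ was arbitrary, the inclusion $\mathcal{S}_\lambda\subseteq\interior$ follows. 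The only real subtlety is ensuring the singular term is accommodated by the regularity theorem, and this is precisely what the pointwise bound $u^{-\eta(\cdot)}\le c_8\hat{d}^{-\eta(\cdot)}$ coming from Proposition~\ref{proposition_6} provides.
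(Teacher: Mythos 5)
Your argument is correct and follows exactly the route the paper intends: the paper itself gives no detailed proof, merely stating that the lower bound $\overline{u}\leq u$ from Proposition \ref{proposition_6} combined with the anisotropic regularity theory of Saoudi--Ghanmi yields the claim, and your proposal fills in precisely those steps (domination of the singular term by $c\,\hat{d}^{-\eta(\cdot)}$, the $L^\infty$ bound of Marino--Winkert, Theorem B.1 of Saoudi--Ghanmi, and Zhang's maximum principle), all of which are the same tools the paper deploys in the proofs of Propositions \ref{proposition_4} and \ref{proposition_5}.
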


Next we prove a structural property of $\mathcal{L}$, namely, we show that $\mathcal{L}$ is connected, so an interval.

\begin{proposition}\label{proposition_8}
	If hypotheses H$_0$, H$_1$ hold, $\lambda\in\mathcal{L}$ and $\mu \in (0,\lambda)$, then $\mu \in \mathcal{L}$.
\end{proposition}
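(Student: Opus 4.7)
The plan is to follow the same strategy used in Proposition \ref{proposition_5}, but replacing the role of the a priori supersolution $\tilde{u}$ by any solution $u_\lambda\in\mathcal{S}_\lambda$, which is available since $\lambda\in\mathcal{L}$. By Proposition \ref{proposition_7} we know $u_\lambda\in\interior$, and by Proposition \ref{proposition_6} we have $\overline{u}\leq u_\lambda$. Since $0<\mu<\lambda$ and $f\geq 0$, the solution $u_\lambda$ is a supersolution for $(P_\mu)$ because
\begin{align*}
-\Delta_{p(\cdot)}u_\lambda-\Delta_{q(\cdot)}u_\lambda=u_\lambda^{-\eta(x)}+\lambda f(x,u_\lambda)\geq u_\lambda^{-\eta(x)}+\mu f(x,u_\lambda)\quad\text{in }\Omega,
\end{align*}
while $\overline{u}$ serves as a subsolution for $(P_\mu)$ since $-\Delta_{p(\cdot)}\overline{u}-\Delta_{q(\cdot)}\overline{u}=\overline{u}^{-\eta(x)}\leq \overline{u}^{-\eta(x)}+\mu f(x,\overline{u})$ in $\Omega$.

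Then I would introduce the Carath\'eodory truncation $j_\mu\colon\Omega\times\overset{\circ}{\R}_+\to\overset{\circ}{\R}_+$ defined, in analogy with \eqref{21}, by
\begin{align*}
j_\mu(x,s)=
\begin{cases}
\overline{u}(x)^{-\eta(x)}+\mu f(x,\overline{u}(x))&\text{if }s<\overline{u}(x),\\
s^{-\eta(x)}+\mu f(x,s)&\text{if }\overline{u}(x)\leq s\leq u_\lambda(x),\\
u_\lambda(x)^{-\eta(x)}+\mu f(x,u_\lambda(x))&\text{if }u_\lambda(x)<s,
\end{cases}
\end{align*}
set $J_\mu(x,s)=\int_0^s j_\mu(x,t)\,dt$, and consider the $C^1$-functional
\begin{align*}
\varphi_\mu(u)=\into\frac{1}{p(x)}|\nabla u|^{p(x)}\,dx+\into\frac{1}{q(x)}|\nabla u|^{q(x)}\,dx-\into J_\mu(x,u)\,dx.
\end{align*}
Because the truncation bounds the nonlinearity uniformly (both the singular part and $\mu f$ are bounded above by $\overline{u}^{-\eta(x)}+\mu\|N_f(u_\lambda)\|_\infty$ on $[\overline{u},u_\lambda]$, with the singular contribution controlled via the anisotropic Hardy inequality exactly as in Proposition \ref{proposition_4}), $\varphi_\mu$ is coercive and sequentially weakly lower semicontinuous on $\Wpzero{p(\cdot)}$. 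By the Weierstra\ss-Tonelli theorem there is a minimizer $u_\mu\in\Wpzero{p(\cdot)}$ satisfying $\varphi_\mu'(u_\mu)=0$.

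Next I would show $u_\mu\in[\overline{u},u_\lambda]$ by testing the Euler equation with $(\overline{u}-u_\mu)^+$ and $(u_\mu-u_\lambda)^+$, invoking the definition of $j_\mu$ together with the sub/supersolution inequalities above, and using the strict monotonicity of $A_{p(\cdot)}+A_{q(\cdot)}$; this is the same computation carried out in detail at the end of the proof of Proposition \ref{proposition_5}. Once $\overline{u}\leq u_\mu\leq u_\lambda$ is established, $j_\mu(x,u_\mu)=u_\mu^{-\eta(x)}+\mu f(x,u_\mu)$ and hence $u_\mu$ solves $(P_\mu)$, giving $\mu\in\mathcal{L}$.

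The only delicate point is handling the singular term $\overline{u}^{-\eta(x)}(\overline{u}-u_\mu)^+$ in the comparison with the subsolution: one must ensure this integral makes sense and that $\overline{u}^{-\eta(\cdot)}\in \Wpzero{p(\cdot)}^*$, which follows from $\overline{u}\in\interior$ together with the anisotropic Hardy inequality of Harjulehto-H\"{a}st\"{o}-Koskenoja (used already in the proofs of Propositions \ref{proposition_4} and \ref{proposition_5}). Everything else is a routine repetition of the truncation-minimization scheme already developed in the preceding section.
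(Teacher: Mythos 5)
Your proposal is correct and follows essentially the same route as the paper: the paper's proof of this proposition introduces exactly the truncation you describe (there called $e_\mu$, cut off at $\overline{u}$ from below and at $u_\lambda$ from above), minimizes the associated coercive $C^1$-functional, and then tests the Euler equation with $(\overline{u}-u_\mu)^+$ and $(u_\mu-u_\lambda)^+$ to place the minimizer in $[\overline{u},u_\lambda]$. Nothing essential differs.
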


\begin{proof}
	Let $u_\lambda\in\mathcal{S}_\lambda\subseteq \interior$, see Proposition \ref{proposition_7}. We introduce the\\Carath\'eodory function $e_\mu\colon \Omega \times \overset{\circ}{\R}_+ \to\overset{\circ}{\R}_+$ defined by
	\begin{align}\label{28}
		e_\mu(x,s)=
		\begin{cases}
			\overline{u}(x)^{-\eta(x)}+\mu f\l(x,\overline{u}(x)\r)&\text{if } s<\overline{u}(x),\\
			s^{-\eta(x)}+\mu f\l(x,s\r)&\text{if } \overline{u}(x)\leq s \leq u_\lambda(x),\\
			u_\lambda(x)^{-\eta(x)}+\mu f\l(x,u_\lambda(x)\r)&\text{if } u_\lambda(x) <s.
		\end{cases}
	\end{align}
	We set $E_\mu(x,s)=\int^s_0e_\mu(x,t)\,dt$ and consider the $C^1$-functional $\sigma_\mu\colon \Wpzero{p(\cdot)}\to\R$ defined by
	\begin{align*}
		\sigma_\mu(u)
		=\into \frac{1}{p(x)}|\nabla u|^{p(x)}\,dx+\into \frac{1}{q(x)}|\nabla u|^{q(x)}\,dx-\into E_\mu(x,u)\,dx
	\end{align*}
	for all $u \in \Wpzero{p(\cdot)}$. It is clear that $\sigma_\mu$ is coercive because of \eqref{28} and it is sequentially weakly lower semicontinuous. So, there exists $u_\mu \in \Wpzero{p(\cdot)}$ such that
	\begin{align*}
		\sigma_\mu(u_\mu)=\min\l[\sigma_\mu(u)\,:\, u \in \Wpzero{p(\cdot)}\r].
	\end{align*}
	That means $\sigma'_\mu(u_\mu)=0$ and so,
	\begin{align}\label{29}
		\l \lan A_{p(\cdot)}\l(u_\mu\r),h\r\ran+\l \lan A_{q(\cdot)}\l(u_\mu\r),h\r\ran=\into e_\mu \l(x,u_\mu\r)h\,dx
	\end{align}
	for all $h \in \Wpzero{p(\cdot)}$. If we choose $h=\l(\overline{u}-u_\mu\r)^+\in\Wpzero{p(\cdot)}$ in \eqref{29} we can show that $\overline{u} \leq u_\mu$, see the proof of Proposition \ref{proposition_5}. Next, we choose $h=\l(u_\mu-u_\lambda\r)^+\in\Wpzero{p(\cdot)}$ in \eqref{29}. Then, by \eqref{28}, $f \geq 0$, $\mu<\lambda$ and $u_\lambda\in\mathcal{S}_\lambda$, we obtain
	\begin{align*}
		&\l \lan A_{p(\cdot)}\l(u_\mu\r),\l(u_\mu-u_\lambda\r)^+\r\ran+\l \lan A_{q(\cdot)}\l(u_\mu\r),\l(u_\mu-u_\lambda\r)^+\r\ran\\
		&=\into e_\mu \l(x,u_\mu\r)\l(u_\mu-u_\lambda\r)^+\,dx\\
		&=\into \l[u_\lambda^{-\eta(x)}+\mu f\l(x,u_\lambda\r)\r]\l(u_\mu-u_\lambda\r)^+\,dx\\
		& \leq \into \l[u_\lambda^{-\eta(x)}+\lambda f\l(x,u_\lambda\r)\r]\l(u_\mu-u_\lambda\r)^+\,dx\\
		& =\l \lan A_{p(\cdot)}\l(u_\lambda \r),\l(u_\mu-u_\lambda\r)^+\r\ran+\l \lan A_{q(\cdot)}\l(u_\lambda\r),\l(u_\mu-u_\lambda\r)^+\r\ran.
	\end{align*}
	Hence, $u_\mu \leq u_\lambda$. Therefore we have 
	\begin{align}\label{30}
		u_\mu \in \l[\overline{u},u_\lambda\r].
	\end{align}
	From \eqref{30}, \eqref{28} and \eqref{29} it follows that
	\begin{align*}
		u_\mu \in \mathcal{S}_\mu \subseteq \interior \quad\text{and so}\quad \mu \in \mathcal{L}.
	\end{align*}
\end{proof}

From Proposition \ref{proposition_8} and its proof we have the following corollary.

\begin{corollary}\label{31}
	If hypotheses H$_0$, H$_1$ hold and if $\lambda \in\mathcal{L}, u_\lambda \in\mathcal{S}_\lambda \subseteq \interior$ and $0<\mu<\lambda$, then $\mu\in\mathcal{L}$ and there exists $u_\mu \in \mathcal{S}_\mu \subseteq \interior$ such that $u_\mu \leq u_\lambda$.
\end{corollary}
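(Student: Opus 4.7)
The plan is to unpack the construction already carried out in the proof of Proposition \ref{proposition_8} and to observe that it delivers the additional ordering $u_\mu\le u_\lambda$ at no extra cost. In that proof, a specific $u_\lambda\in\mathcal{S}_\lambda$ is fixed at the outset and used to define the \Cara truncation $e_\mu(x,\cdot)$ in \eqref{28}, with lower cutoff $\overline{u}(x)$ and upper cutoff $u_\lambda(x)$. The minimizer $u_\mu$ of the associated energy $\sigma_\mu$ is then shown, by testing the Euler equation $\sigma'_\mu(u_\mu)=0$ first against $(\overline{u}-u_\mu)^+$ and then against $(u_\mu-u_\lambda)^+$ (using $\mu<\lambda$ and $f\ge 0$), to lie in the order interval $[\overline{u},u_\lambda]$.

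Given the hypotheses of the corollary, I would apply that construction verbatim, starting from the prescribed $u_\lambda\in\mathcal{S}_\lambda\subseteq\interior$, obtain the same $u_\mu\in[\overline{u},u_\lambda]$, and note that on this interval the truncated reaction $e_\mu(x,u_\mu)$ coincides with $u_\mu^{-\eta(x)}+\mu f(x,u_\mu)$. Consequently $u_\mu$ is a positive solution of \eqref{problem} with parameter $\mu$, so $u_\mu\in\mathcal{S}_\mu$; Proposition \ref{proposition_7} then upgrades the regularity to $u_\mu\in\interior$, and the required inequality $u_\mu\le u_\lambda$ is read off directly from the interval membership.

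No new obstacle arises beyond what was already resolved in Proposition \ref{proposition_8}; the only observation needed is that the ordered majorant used in that construction was the particular $u_\lambda$ supplied by the present hypothesis, not merely an arbitrary member of $\mathcal{S}_\lambda$. In this sense the corollary is essentially a bookkeeping consequence of the preceding proof, recording the two facts already established there: the existence of some $u_\mu\in\mathcal{S}_\mu$, and its a priori pointwise control by the chosen $u_\lambda$.
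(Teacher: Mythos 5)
Your proposal is correct and matches the paper's own reasoning: the corollary is stated there as an immediate consequence of Proposition \ref{proposition_8} and its proof, where the truncation \eqref{28} is built from the given $u_\lambda$ and the minimizer is shown to satisfy $u_\mu\in[\overline{u},u_\lambda]$, which yields both $u_\mu\in\mathcal{S}_\mu$ and $u_\mu\le u_\lambda$.
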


In the next proposition we are going to improve the assertion of Corollary \ref{31}.

\begin{proposition}\label{proposition_9}
	If hypotheses H$_0$, H$_1$ hold and if $\lambda \in\mathcal{L}, u_\lambda \in\mathcal{S}_\lambda \subseteq \interior$ and $0<\mu<\lambda$, then $\mu\in\mathcal{L}$ and there exists $u_\mu \in \mathcal{S}_\mu \subseteq \interior$ such that
	\begin{align*}
		u_\lambda-u_\mu \in \interior.
	\end{align*}
\end{proposition}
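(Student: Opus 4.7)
The plan is to upgrade the inequality $u_\mu \leq u_\lambda$ already furnished by Corollary \ref{31} into the strict cone relation $u_\lambda - u_\mu \in \interior$ by invoking the strong comparison principle of Proposition \ref{proposition_3}(a). Corollary \ref{31} supplies some $u_\mu \in \mathcal{S}_\mu$ with $u_\mu \leq u_\lambda$, and, by Proposition \ref{proposition_7}, both $u_\mu$ and $u_\lambda$ lie in $\interior$; in particular each is strictly positive in $\Omega$ and bounded, and $u_\mu = u_\lambda = 0$ on $\partial\Omega$.

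To put the problem into the frame of Proposition \ref{proposition_3}(a), set $\rho := \|u_\lambda\|_{\infty}$, which is finite since $u_\lambda \in C^1_0(\close)$, and let $\hat{\xi}_\rho > 0$ be the constant supplied by hypothesis H$_1$(iv), so that $s \mapsto f(x,s) + \hat{\xi}_\rho s^{p(x)-1}$ is nondecreasing on $[0,\rho]$ for a.\,a.\ $x \in \Omega$. Taking the auxiliary weight $\hat{\xi}(x) \equiv \mu\hat{\xi}_\rho$, I rewrite the equations for $u_\mu$ and $u_\lambda$ in the required form:
\begin{align*}
-\Delta_{p(\cdot)}u_\mu - \Delta_{q(\cdot)}u_\mu + \hat{\xi}(x) u_\mu^{p(x)-1} - u_\mu^{-\eta(x)}
&= \mu\bigl[f(x,u_\mu) + \hat{\xi}_\rho u_\mu^{p(x)-1}\bigr] =: h_1(x), \\
-\Delta_{p(\cdot)}u_\lambda - \Delta_{q(\cdot)}u_\lambda + \hat{\xi}(x) u_\lambda^{p(x)-1} - u_\lambda^{-\eta(x)}
&= \lambda f(x,u_\lambda) + \mu\hat{\xi}_\rho u_\lambda^{p(x)-1} =: h_2(x),
\end{align*}
with $h_1, h_2 \in \Linf$. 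Since $0 \leq u_\mu \leq u_\lambda \leq \rho$, hypothesis H$_1$(iv) makes the $\mu$-increments cancel nonnegatively, so
\[
h_2(x) - h_1(x) \;\geq\; (\lambda - \mu)\, f(x, u_\lambda(x)) \;\geq\; 0 \qquad\text{for a.\,a.\ } x \in \Omega.
\]

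It remains to upgrade this to the strict relation $h_1 \preceq h_2$. Fix a compact set $K \subseteq \Omega$. Since $u_\lambda \in \interior$, there is $m_K > 0$ with $u_\lambda \geq m_K$ on $K$; combined with the continuity of $u_\lambda$ and the positivity of $f(x,\cdot)$ on $(0,+\infty)$ that is implicit in our setting (if $f$ vanished identically the perturbation would disappear and the claim would be vacuous), this yields a constant $c'_K > 0$ with $f(\cdot, u_\lambda(\cdot)) \geq c'_K$ on $K$. Multiplying by $\lambda-\mu > 0$ gives $h_2 - h_1 \geq (\lambda-\mu)\, c'_K$ on $K$, i.e.\ $h_1 \preceq h_2$. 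Applying Proposition \ref{proposition_3}(a) with $u = u_\mu$ and $v = u_\lambda \in \interior$ then delivers the desired conclusion $u_\lambda - u_\mu \in \interior$.

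The main obstacle is the pointwise lower bound $f(\cdot, u_\lambda(\cdot)) \geq c'_K > 0$ on compact subsets of $\Omega$: without it, one only obtains the weak inequality $u_\mu \leq u_\lambda$ already provided by Corollary \ref{31}. Once that positive lower bound is in place, the proof is a mechanical application of Proposition \ref{proposition_3}(a) combined with the monotonicity structure of H$_1$(iv); the remaining ingredients — the $\Linf$ bound on $u_\lambda$, its membership in $\interior$, and the comparison $u_\mu \leq u_\lambda$ — are already at our disposal from the previous results.
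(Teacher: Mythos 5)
Your argument tracks the paper's proof almost exactly up to the final step: both start from Corollary \ref{31} to get $u_\mu\in\mathcal{S}_\mu$ with $u_\mu\leq u_\lambda$, take $\rho=\|u_\lambda\|_\infty$ and the constant $\hat{\xi}_\rho$ of H$_1$(iv), rewrite the two equations with the weight $\mu\hat{\xi}_\rho$, and reduce the claim to the strict ordering $h_1\preceq h_2$ required by Proposition \ref{proposition_3}(a). The gap is in how you establish that strict ordering. Your estimate $h_2-h_1\geq(\lambda-\mu)f(\cdot,u_\lambda(\cdot))\geq 0$ is correct, but the asserted lower bound $f(\cdot,u_\lambda(\cdot))\geq c_K'>0$ on compact $K\subseteq\Omega$ does not follow from the hypotheses: H$_1$ only requires $f\geq 0$ together with superlinearity at $+\infty$, so $f(x,\cdot)$ may vanish identically on an interval $[0,s_0]$ with $s_0>0$ (this is fully compatible with H$_1$(i)--(iv)), and on a compact set where $u_\lambda\leq s_0$ your constant would be zero. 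The parenthetical appeal to ``positivity of $f(x,\cdot)$ on $(0,+\infty)$ implicit in our setting'' is an extra assumption, not something granted by H$_1$; and the claim is not ``vacuous'' without it, since the perturbation $\lambda f$ still matters wherever $u_\lambda$ is large.

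The paper anchors the strict separation elsewhere: it invokes the quantity $(\lambda-\mu)\bigl[N_f(u_\mu)+\hat{\xi}_\rho u_\mu^{p(\cdot)-1}\bigr]\succeq 0$, whose strict positivity on compact sets comes from the power term $\hat{\xi}_\rho u_\mu^{p(x)-1}$ --- bounded below by a positive constant on every compact $K\subseteq\Omega$ precisely because $u_\mu\in\interior$ --- and not from $f$. That is the missing idea in your write-up: the robust source of the $\preceq$ relation is $u_\mu^{p(\cdot)-1}$ (or $u_\lambda^{p(\cdot)-1}$), which is available unconditionally, whereas $f(\cdot,u_\lambda(\cdot))$ is not. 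To repair your proof you should reorganize the comparison so that the difference of the right-hand sides is bounded below on compacta by a multiple of $u_\mu^{p(x)-1}$ (as in the paper and in inequality \eqref{38} of Proposition \ref{proposition_11}, where exactly this device is used via $0\preceq\hat{\xi}_\rho u_0^{p(x)-1}$), rather than by $(\lambda-\mu)f(x,u_\lambda(x))$ alone.
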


\begin{proof}
	From Corollary \ref{31} we already know that $\mu \in \mathcal{L}$ and that there exists $u_\mu \in \mathcal{S}_\mu \subseteq \interior$ such that
	\begin{align}\label{32}
		u_\mu \leq u_\lambda.
	\end{align}
	
	Let $\rho=\|u_\lambda\|_\infty$ and let $\hat{\xi}_\rho>0$ be as postulated by hypothesis H$_1$(iv). Applying $u_\mu \in \mathcal{S}_\mu$, \eqref{32}, hypothesis H$_1$(iv), $f \geq 0$, $\mu<\lambda$ and $u_\lambda \in \mathcal{S}_\lambda$ gives
	\begin{align}\label{33}
		\begin{split}
			&-\Delta_{p(\cdot)}u_\mu-\Delta_{q(\cdot)}u_\mu+\mu\hat{\xi}_\rho u_\mu^{p(x)-1}-u_\mu^{-\eta(x)}\\
			&=\mu \l[f \l(x,u_\mu\r)+\hat{\xi}_\rho u_\mu^{p(x)-1}\r]\\
			& \leq \mu \l[f \l(x,u_\lambda\r)+\hat{\xi}_\rho u_\lambda^{p(x)-1}\r]\\
			& \leq \lambda  f \l(x,u_\lambda\r)+\mu \hat{\xi}_\rho u_\lambda^{p(x)-1}\\
			&=-\Delta_{p(\cdot)}u_\lambda-\Delta_{q(\cdot)}u_\lambda+\mu\hat{\xi}_\rho u_\lambda^{p(x)-1}-u_\lambda^{-\eta(x)}.
		\end{split}
	\end{align}
	Note that since $u_\mu \in \interior$, $f \geq 0$ and $\mu<\lambda$, we have
	\begin{align*}
		(\lambda-\mu) \l[N_f(u_\mu)+\hat{\xi}_\rho u_\mu^{p(\cdot)-1}\r] \succeq 0.
	\end{align*}
	Hence, from \eqref{33} and Proposition \ref{proposition_3}(a), we infer that
	\begin{align*}
		u_\lambda-u_\mu \in \interior.
	\end{align*} 
\end{proof}

We set $\lambda^*=\sup \mathcal{L}$.

\begin{proposition}
	If hypotheses H$_0$, H$_1$ hold, then $\lambda^*<+\infty$.
\end{proposition}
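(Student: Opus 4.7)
My strategy is to argue by contradiction. Assume $\lambda^* = +\infty$ and choose $\lambda_n \to +\infty$ in $\mathcal{L}$ together with $u_n \in \mathcal{S}_{\lambda_n} \subseteq \interior$; by Proposition~\ref{proposition_6}, $u_n \geq \overline{u}$ for every $n$. The $(p_+-1)$-superlinearity of $f$ at $+\infty$ provided by H$_1$(ii) and H$_1$(iii) must then turn out to be incompatible with the subcritical upper growth H$_1$(i).

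First I would distill a pointwise lower bound on $f$. Using H$_1$(ii) to get $F(x,s) \geq (M/p_+) s^{p_+}$ for $s \geq s_M$, then H$_1$(iii) (which yields $f(x,s)s \geq p_+F(x,s)$ past a threshold), and extending by $f \geq 0$, one obtains for every $M>0$ a constant $C_M>0$ such that
\[
f(x,s) \;\geq\; M s^{p_+-1} - C_M \qquad \text{for all } s \geq 0,\ \text{a.a.\ } x \in \Omega.
\]
In parallel, the stronger form of H$_1$(iii) supplies the integral estimate $\int_\Omega (f(x,u_n) u_n - p_+F(x,u_n))\,dx \geq (\gamma_0/2)\int_\Omega u_n^{\tau(x)}\,dx - C$, which will be used below to beat the additive $C_M$-type error.

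Testing the equation for $u_n$ against $u_n$ itself produces
\[
\varrho_{p(\cdot)}(\nabla u_n) + \varrho_{q(\cdot)}(\nabla u_n) = \int_\Omega u_n^{1-\eta(x)}\,dx + \lambda_n \int_\Omega f(x, u_n) u_n\, dx.
\]
Combining the pointwise bound on $f$ with $u_n \geq \overline{u}$ and the integral inequality from H$_1$(iii), the right-hand side is bounded below by a constant multiple of $\lambda_n$, so by Proposition~\ref{proposition_1} the modular sum on the left forces $\|\nabla u_n\|_{p(\cdot)} \to +\infty$. On the other hand, H$_1$(i) together with the Sobolev embedding $\Wpzero{p(\cdot)} \hookrightarrow L^r(\Omega)$ (available since $r < p_-^*$) gives
\[
\lambda_n \int_\Omega f(x,u_n) u_n\, dx \;\leq\; c_1 \lambda_n \bigl(1 + \|\nabla u_n\|_{p(\cdot)}^{r}\bigr).
\]
Matching the resulting lower and upper bounds via the modular-norm inequalities of Proposition~\ref{proposition_1} gives two growth rates in $\lambda_n$ and $\|\nabla u_n\|_{p(\cdot)}$ which cannot be simultaneously satisfied as $\lambda_n \to +\infty$, yielding the desired contradiction.

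The main obstacle is the calibration of $C_M$ against $M$ in the pointwise lower bound on $f$: since the threshold $s_M$ coming from H$_1$(ii) is only qualitatively controlled, $C_M$ may grow faster than $M$, so the naive pointwise estimate alone is too weak to make the energy identity's right-hand side genuinely linear in $\lambda_n$. This is exactly where the extra $s^{\tau(x)}$-surplus in H$_1$(iii) enters, and where the specific lower bound $\tau(x) > (r - p_-)N/p_-$ — tied to the Sobolev exponent from H$_1$(i) — is used to absorb the error and close the contradiction.
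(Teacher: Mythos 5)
Your argument takes a genuinely different route from the paper's, and as it stands it does not close. The paper's proof is purely local and pointwise: from H$_1$(i)--(iii) it extracts a $\hat{\lambda}>0$ with $s^{-\eta(x)}+\hat{\lambda}f(x,s)\geq s^{p(x)-1}$ for all $s>0$, and then, for $\lambda>\hat{\lambda}$ and $u\in\mathcal{S}_\lambda$, it compares $u$ on a subdomain $\Omega_0$ with $\overline{\Omega}_0\subseteq\Omega$ against the constant functions $m_0+\delta$, where $m_0=\min_{\overline{\Omega}_0}u>0$, and uses the strong comparison principle of Proposition \ref{proposition_3}(b) to contradict the definition of $m_0$. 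No energy identity is used. This is not an accident: for a superlinear $f$, testing the equation with $u$ itself puts the large quantity $\lambda\int_\Omega f(x,u)u\,dx$ on the \emph{right}-hand side, so global energy estimates tend to produce compatible, not contradictory, bounds.

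Concretely, your final "matching of growth rates" is the gap. Even granting your lower bound, the energy identity gives $\varrho_{p(\cdot)}(\nabla u_n)\geq c\lambda_n$, hence $\|u_n\|\geq c'\lambda_n^{1/p_+}$ by Proposition \ref{proposition_1}. Your upper bound reads $\|u_n\|^{p_-}\leq c_1(1+\|u_n\|+\lambda_n\|u_n\|^{r})$ with $r>p_+\geq p_-$, which is satisfied by \emph{every} sufficiently large $\|u_n\|$ and carries no information; and even if you run the interpolation argument of the paper's C-condition proof to replace $r$ by $tr<p_-$, you only get $\|u_n\|\leq C\lambda_n^{1/(p_--tr)}$, which is compatible with $\|u_n\|\geq c'\lambda_n^{1/p_+}$ because $p_--tr<p_-\leq p_+$. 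So no contradiction arises. There are two further problems upstream: (i) the bound $\int_\Omega f(x,u_n)u_n\,dx\geq c>0$ uniformly in $n$ is not justified --- $u_n\geq\overline{u}$ only bounds $u_n$ below, and H$_1$ allows $f(x,\cdot)$ to vanish on a large interval $[0,K]$, so $M\varrho_{p_+}(u_n)-C_M\|u_n\|_1$ need not be positive for any admissible choice of $M$ (this is exactly the calibration issue you flag, and the $s^{\tau(x)}$ surplus does not repair it, since $\gamma_1\varrho_{\tau(\cdot)}(\overline{u})-c_{12}|\Omega|$ need not be positive either); (ii) the $L^{\tau(\cdot)}$ bound on $\{u_n\}$ that the interpolation step needs is obtained in the paper by combining the equation with a bound on the \emph{energy} $w_\lambda(u_n)$, which you do not have for arbitrary $u_n\in\mathcal{S}_{\lambda_n}$. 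You should abandon the global energy identity and argue as the paper does, via the pointwise inequality and a comparison with constants on a compactly contained subdomain.
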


\begin{proof}
	Hypotheses H$_1$(i), (ii) and (iii) imply that we can find $\hat{\lambda}>0$ such that
	\begin{align}\label{34}
		s^{-\eta(x)}+\hat{\lambda}f(x,s)\geq s^{p(x)-1} \quad\text{for a.\,a.\,}x\in \Omega \text{ and for all }s>0.
	\end{align}
	Let $\lambda>\hat{\lambda}$ and suppose that $\lambda \in \mathcal{L}$. We can find $u\in \mathcal{S}_\lambda\subseteq \interior$ and from Proposition \ref{proposition_6} we have $\overline{u} \leq u$. Let $\Omega_0\subseteq \Omega$ be an open subset with $C^2$-boundary, $\overline{\Omega}_0\subseteq \Omega$ and $m_0=\min_{x\in \overline{\Omega}_0}u(x) \leq 1$. Note that since $u \in \interior$ we have $0<m_0$. Let $\delta \in (0,1)$ be small and set $m_0^\delta=m_0+\delta$. Note that
	\begin{align}\label{35}
		0 \leq \frac{1}{m_0^{\eta(x)}}-\frac{1}{\l(m_0^\delta\r)^{\eta(x)}}
		\leq \frac{\delta^{\eta(x)}}{m_0^{2\eta(x)}} \leq \frac{\delta^{\eta_-}}{m_0^{2\eta_+}}\quad\text{for all }x\in \close.
	\end{align}
	Let $\rho=\|u\|_\infty$ and let $\hat{\xi}_\rho>0$ be as postulated by hypothesis H$_1$(iv). Then, by applying \eqref{35}, \eqref{34}, $m_0\leq 1$, $\delta>0$ small enough, $f \geq 0$ and $\lambda>\hat{\lambda}$ we obtain
	\begin{align*}
		& -\Delta_{p(\cdot)}m_0^\delta-\Delta_{q(\cdot)}m_0^\delta+\hat{\lambda} \hat{\xi}_\rho \l(m_0^\delta\r)^{p(x)-1}-\l(m_0^\delta\r)^{-\eta(x)}\\
		& \leq \hat{\lambda} \hat{\xi}_\rho m_0^{p(x)-1}+\chi(\delta)-m_0^{-\eta(x)} \quad\text{with }\chi(\delta)\to 0^+ \text{ as }\delta\to 0^+,\\
		&\leq \l[\hat{\lambda} \hat{\xi}_\rho+1\r] m_0^{p(x)-1}+\chi(\delta)-m_0^{-\eta(x)}\\
		& \leq \hat{\lambda} \l[f(x,m_0)+ \hat{\xi}_\rho m_0^{p(x)-1}\r]+\chi(\delta)-m_0^{-\eta_+}\\
		& < \hat{\lambda} \l[f(x,u)+ \hat{\xi}_\rho u^{p(x)-1}\r]\\
		& \leq \lambda f(x,u)+ \hat{\lambda}\hat{\xi}_\rho u^{p(x)-1}\\
		& =-\Delta_{p(\cdot)}u-\Delta_{q(\cdot)}u+\hat{\lambda} \hat{\xi}_\rho u^{p(x)-1}-u^{-\eta(x)} \qquad \text{in }\Omega_0.
	\end{align*}
	Then, by Proposition \ref{proposition_3}(b), we get $m_0^\delta<u(x)$ for all $x \in \Omega_0$ and for all $\delta \in (0,1)$ small enough. This contradicts the definition of $m_0$. Therefore, $\lambda^* \leq \hat{\lambda}<+\infty$. 
\end{proof}

Next we are going to prove that we have multiple solutions for all $\lambda \in (0,\lambda^*)$.

\begin{proposition}\label{proposition_11}
	If hypotheses H$_0$, H$_1$ hold and $\lambda \in (0,\lambda^*)$, then problem \eqref{problem} has at least two positive solutions
	\begin{align*}
		u_0, \hat{u} \in \interior \text{ with }u_0 \neq \hat{u}.
	\end{align*}
\end{proposition}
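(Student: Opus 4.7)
Since $\lambda^{*}=\sup\mathcal{L}$ and $\lambda<\lambda^{*}$, I first pick $\mu\in(\lambda,\lambda^{*})\cap\mathcal{L}$ together with some $u_\mu\in\mathcal{S}_\mu$. Applying Proposition \ref{proposition_9} at the larger parameter $\mu$ and the smaller value $\lambda<\mu$ then produces a first positive solution
\begin{align*}
u_0\in\mathcal{S}_\lambda\subseteq\interior\quad\text{with}\quad u_\mu-u_0\in\interior.
\end{align*}
The task is to produce a second positive solution distinct from $u_0$. To bypass the singularity, I would truncate the reaction from below at $u_0$ by setting
\begin{align*}
w_\lambda(x,s)=\begin{cases}u_0(x)^{-\eta(x)}+\lambda f(x,u_0(x))&\text{if }s\leq u_0(x),\\ s^{-\eta(x)}+\lambda f(x,s)&\text{if }s>u_0(x),\end{cases}
\end{align*}
$W_\lambda(x,s)=\int_0^s w_\lambda(x,t)\,dt$, and study the associated energy
\begin{align*}
\hat\varphi_\lambda(u)=\into\frac{|\nabla u|^{p(x)}}{p(x)}\,dx+\into\frac{|\nabla u|^{q(x)}}{q(x)}\,dx-\into W_\lambda(x,u)\,dx.
\end{align*}
Since $u_0\in\interior$ one has $u_0\geq c\hat{d}$ on $\close$, so the anisotropic Hardy inequality (already used in the proofs of Propositions \ref{proposition_4} and \ref{proposition_5}) ensures that $\hat\varphi_\lambda$ is well defined and of class $C^1$ on $\Wpzero{p(\cdot)}$. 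Moreover, testing a critical point of $\hat\varphi_\lambda$ with $(u_0-u)^+$ and invoking the strict monotonicity of $A_{p(\cdot)}+A_{q(\cdot)}$ forces $u\geq u_0$, so that every such critical point belongs to $\mathcal{S}_\lambda$.

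The crucial step is to show that $u_0$ is a local minimizer of $\hat\varphi_\lambda$ in $\Wpzero{p(\cdot)}$. I would truncate additionally from above at $u_\mu$, obtaining a coercive, sequentially weakly lower semicontinuous auxiliary functional $\tilde\varphi_\lambda$. Its global minimizer $\tilde u$ lies in $[u_0,u_\mu]$ by the usual $(u_0-\tilde u)^+$- and $(\tilde u-u_\mu)^+$-tests, so $\tilde u\in\mathcal{S}_\lambda$; if $\tilde u\neq u_0$ the conclusion of the theorem already holds. Otherwise, since $\hat\varphi_\lambda$ and $\tilde\varphi_\lambda$ coincide on $[u_0,u_\mu]$ and since $u_\mu-u_0\in\interior$ forces this order interval to contain a $C^1_0(\close)$-neighborhood of $u_0$, the identification $\tilde u=u_0$ makes $u_0$ a local minimizer of $\hat\varphi_\lambda$ in the $C^1_0(\close)$-topology; the anisotropic regularity machinery invoked in Proposition \ref{proposition_7} (cf.\ Saoudi--Ghanmi \cite{20-Saoudi-Ghanmi-2017}) then upgrades it to a local minimizer in $\Wpzero{p(\cdot)}$.

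With this in hand I would apply the mountain pass theorem to $\hat\varphi_\lambda$. Provided $u_0$ is isolated in $K_{\hat\varphi_\lambda}$, a standard argument yields a small sphere around $u_0$ on which $\hat\varphi_\lambda\geq\hat\varphi_\lambda(u_0)+\rho$, while the $(p_+-1)$-superlinearity H$_1$(ii) supplies a far endpoint $e$ with $\hat\varphi_\lambda(e)<\hat\varphi_\lambda(u_0)$, for example along the ray $t\overline u$ with $t\to+\infty$. The main obstacle is verifying the Cerami condition without the Ambrosetti--Rabinowitz assumption: for a Cerami sequence $\{u_n\}$, combining $\hat\varphi_\lambda(u_n)$ with $\langle\hat\varphi_\lambda'(u_n),u_n\rangle/(1+\|u_n\|)$ yields an upper bound on $\into[f(x,u_n)u_n-p_+F(x,u_n)]\,dx$, which via H$_1$(iii) controls $\|u_n\|_{\tau(\cdot)}$; interpolating this $L^{\tau(\cdot)}$-bound with the embedding $\Wpzero{p(\cdot)}\hookrightarrow L^{r}(\Omega)$ by means of the strict inequality $\tau(x)>(r-p_-)N/p_-$ gives boundedness of $\{u_n\}$ in $\Wpzero{p(\cdot)}$, after which the $\Ss_+$-property of Proposition \ref{proposition_2} supplies strong convergence. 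The mountain pass theorem then produces $\hat u\in K_{\hat\varphi_\lambda}\setminus\{u_0\}$, and by the observation of the first paragraph $\hat u\in\mathcal{S}_\lambda\subseteq\interior$, completing the construction.
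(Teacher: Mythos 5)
Your proposal is correct and follows essentially the same route as the paper: a double truncation sandwiched between the solution at a larger parameter value (obtained via Proposition \ref{proposition_9}) and a lower barrier, identification of $u_0$ as a local minimizer, and the mountain pass theorem with the Cerami condition verified through H$_1$(iii) and the interpolation argument; the only real difference is that you truncate from below at $u_0$ itself, whereas the paper truncates at the purely singular solution $\overline{u}$ and separately proves $u_0-\overline{u}\in\interior$. One imprecision worth fixing: the order interval $[u_0,u_\mu]$ can never contain a $C^1_0(\close)$-neighborhood of its own lower endpoint $u_0$; what you actually need (and do have) is that $\hat\varphi_\lambda$ and $\tilde\varphi_\lambda$ coincide on the larger set $\l\{u\in\Wpzero{p(\cdot)}: u\leq u_\mu\r\}$, which contains such a neighborhood precisely because $u_\mu-u_0\in\interior$, so the global minimality of $u_0$ for $\tilde\varphi_\lambda$ still yields the local $C^1_0(\close)$-minimality for $\hat\varphi_\lambda$.
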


\begin{proof}
	Let $0<\lambda<\vartheta<\lambda^*$. On account of Proposition \ref{proposition_9} we can find $u_\vartheta \in \mathcal{S}_\vartheta\subseteq \interior$ and $u_0 \in \mathcal{S}_\lambda \subseteq \interior$ such that 
	\begin{align}\label{36}
		u_\vartheta-u_0 \in \interior.
	\end{align}
	Also from Proposition \ref{proposition_6} we have
	\begin{align}\label{37}
		\overline{u} \leq u_0.
	\end{align}
	Let $\rho=\|u_0\|_\infty$ and let $\hat{\xi}_\rho>0$ be as postulated by hypothesis H$_1$(iv). Then, using $f \geq 0$, \eqref{37}, hypothesis H$_1$(iv) and $u_0 \in \mathcal{S}_\lambda$, we obtain
	\begin{align}\label{38}
		\begin{split} 
			&-\Delta_{p(\cdot)}\overline{u}-\Delta_{q(\cdot)} \overline{u}+\lambda \hat{\xi}_\rho \overline{u}^{p(x)-1}-\overline{u}^{-\eta(x)}\\
			& \leq \lambda \l[f\l(x,\overline{u}\r)+\hat{\xi}_\rho \overline{u}^{p(x)-1}\r]\\
			& \leq \lambda \l[f\l(x,u_0\r)+\hat{\xi}_\rho u_0^{p(x)-1}\r]\\
			&=-\Delta_{p(\cdot)}u_0-\Delta_{q(\cdot)} u_0+\lambda \hat{\xi}_\rho u_0^{p(x)-1}-u_0^{-\eta(x)}\quad \text{in }\Omega.
		\end{split} 
	\end{align}
	Note that $0 \preceq \hat{\xi}_\rho u_0^{p(x)-1}$ since $u_0 \in \interior$. So, from \eqref{38} and Proposition \ref{proposition_3}(a), we get that
	\begin{align}\label{39}
		u_0-\overline{u} \in \interior.
	\end{align}
	From \eqref{36} and \eqref{39} it follows that
	\begin{align}\label{40}
		u_0 \in \sideset{}{_{C^1_0(\close)}}\ints \l[\overline{u}, u_\vartheta\r].
	\end{align}
	
	We introduce the Carath\'eodory function $j_\lambda\colon \Omega\times\overset{\circ}{\R}_+\to\overset{\circ}{\R}_+$ defined by
	\begin{align}\label{41}
		j_\lambda(x,s)=
		\begin{cases}
			\overline{u}(x)^{-\eta(x)}+\lambda f\l(x,\overline{u}(x)\r) &\text{if }s \leq \overline{u}(x),\\
			s^{-\eta(x)}+\lambda f\l(x,s\r) &\text{if }\overline{u}(x)<s.
		\end{cases}
	\end{align}
	Moreover, we introduce the truncation of $j_\lambda(x,\cdot)$ at $u_\vartheta(x)$, namely, the Carath\'eodory function $\hat{j}_\lambda\colon\Omega\times\overset{\circ}{\R}_+\to\overset{\circ}{\R}_+$ defined by
	\begin{align}\label{42}
		\hat{j}_\lambda(x,s)=
		\begin{cases} 
			j_\lambda(x,s) & \text{if } s \leq u_\vartheta(x),\\
			j_\lambda\l(x,u_\vartheta(x)\r) & \text{if } u_\vartheta(x)<s.
		\end{cases} 
	\end{align}
	We set $J_\lambda(x,s)=\int^s_0 j_\lambda(x,t)\,dt$ and $\hat{J}_\lambda(x,s)=\int^s_0 \hat{j}_\lambda(x,t)\,dt$ and consider the $C^1$-functionals $w_{\lambda}, \hat{w}_\lambda\colon \Wpzero{p(\cdot)}\to \R$ defined by
	\begin{align*}
		w_\lambda(u)
		&=\into \frac{1}{p(x)}|\nabla u|^{p(x)}\,dx +\into \frac{1}{q(x)}|\nabla u|^{q(x)}\,dx-\into J_\lambda (x,u)\,dx,\\[1ex]
		\hat{w}_\lambda(u)&=\into \frac{1}{p(x)}|\nabla u|^{p(x)}\,dx +\into \frac{1}{q(x)}|\nabla u|^{q(x)}\,dx-\into \hat{J}_\lambda (x,u)\,dx
	\end{align*}
	for all $u \in \Wpzero{p(\cdot)}$.
	
	From \eqref{41} and \eqref{42} it is clear that
	\begin{align}\label{43}
		w_\lambda \big|_{[0,u_\vartheta]}=\hat{w}_\lambda \big|_{[0,u_\vartheta]}
		\quad\text{and}\quad
		w'_\lambda \big|_{[0,u_\vartheta]}=\hat{w}'_\lambda \big|_{[0,u_\vartheta]}.
	\end{align}
	Moreover, applying \eqref{41} and \eqref{42}, we can easily show that
	\begin{align}\label{44}
		K_{w_\lambda} \subseteq [\overline{u})\cap \interior
		\quad\text{and}\quad
		K_{\hat{w}_\lambda}\subseteq [\overline{u},u_\vartheta]\cap \interior.
	\end{align}
	On account of \eqref{42} and \eqref{44}, we see that we may assume that
	\begin{align}\label{45}
		K_{\hat{w}_\lambda}=\{u_0\}.
	\end{align}
	Otherwise we already have a second positive smooth solution for problem \eqref{problem} and so we are done, see \eqref{42} and \eqref{44}.
	
	From \eqref{42} we see that the functional $\hat{w}_\lambda\colon \Wpzero{p(\cdot)}\to \R$ is coercive and it is easy to check that it is sequentially weakly lower semicontinuous. Hence, its global minimizer $\hat{u}_0\in\Wpzero{p(\cdot)}$ exists, that is,
	\begin{align*}
		\hat{w}_\lambda\l(\hat{u}_0\r)=\min \l[\hat{w}_\lambda(u)\,:\,u\in\Wpzero{p(\cdot)}\r].
	\end{align*}
	From \eqref{45} we conclude that $\hat{u}_0=u_0$. From \eqref{40} and \eqref{43} it follows that $u_0$ is a local $C^1_0(\close)$-minimizer of $w_\lambda$, Hence
	\begin{align}\label{46}
		u_0 \text{ is a local $\Wpzero{p(\cdot)}$-minimizer of $w_\lambda$},
	\end{align}
	see Tan-Fang \cite{22-Tan-Fang-2013} and Gasi\'nski-Papageorgiou \cite{7-Papageorgiou-Gasinski-2011}. From \eqref{41} and \eqref{44} we see that we can assume that
	\begin{align}\label{47}
		K_{w_\lambda} \text{ is finite}.
	\end{align}
	Otherwise we already have an infinity of positive smooth solutions for problem \eqref{problem} and so we are done.
	
	Then, from \eqref{46}, \eqref{47} and Theorem 5.7.4 of Papageorgiou-R\u{a}dulescu-Repov\v{s} \cite[p.\,449]{13-Papageorgiou-Radulescu-Repovs-2019} we know that there exists $\rho \in (0,1)$ small such that
	\begin{align}\label{48}
		w_\lambda(u_0)<\inf \l[w_\lambda(u)\,:\, \|u-u_0\|=\rho \r]=m_\lambda.
	\end{align}
	
	On account of hypothesis H$_1$(ii), if $u \in \interior$, then
	\begin{align}\label{49}
		w_\lambda(tu)\to -\infty \quad\text{as }t \to +\infty.
	\end{align}
	
	In order to apply the mountain pass theorem we only need to show that the functional $w_\lambda$ satisfies the C-condition.
	
	{\bf Claim:} $w_\lambda$ fulfills the C-condition.
	
	We consider the sequence $\{u_n\}_{n\in\N}\subseteq \Wpzero{p(\cdot)}$ such that
	\begin{align}
		|w_\lambda(u_n)|\leq c_7\quad \text{for some }c_7>0 \text{ and for all }n\in\N,\label{50}\\
		(1+\|u_n\|)w'_\lambda(u_n) \to 0 \text{ in } W^{-1,p'(\cdot)}(\Omega) \text{ as }n\to \infty.\label{51}
	\end{align}
	From \eqref{51} we have
	\begin{align}\label{52}
		\l|\l\lan A_{p(\cdot)}(u_n),h\r\ran+\l\lan A_{q(\cdot)}(u_n),h\r\ran - \into j_\lambda (x,u_n)h\,dx \r| \leq \frac{\eps_n \|h\|}{1+\|u_n\|}
	\end{align}
	for all $h\in\Wpzero{p(\cdot)}$ with $\eps_n\to 0^+$. Choosing $h=-u_n^-\in\Wpzero{p(\cdot)}$ in \eqref{52}, recalling that $\overline{u}^{-\eta(\cdot)}h\in \Lp{1}$ for all $h\in \Wpzero{p(\cdot)}$, see Harjulehto-H\"{a}st\"{o}-Koskenoja \cite{Harjulehto-Hasto-Koskenoja-2005}, and applying \eqref{41} leads to
	\begin{align*}
		\varrho_{p(\cdot)}(\nabla u_n^-)+\varrho_{q(\cdot)}(\nabla u_n^-)  \leq c_8 \l\|u_n^-\r\| \quad\text{for some }c_8>0 \text{ and for all }n\in\N,
	\end{align*}
	which implies that 
	\begin{align}\label{53}
		\l\{u_n^-\r\}_{n\in\N} \subseteq \Wpzero{p(\cdot)} \text{ is bounded}.
	\end{align}
	
	Now we choose $h=u_n^+ \in \Wpzero{p(\cdot)}$ as test function in \eqref{52}. This gives
	\begin{align}\label{54}
		-\varrho_{p(\cdot)}(\nabla u_n^+)-\varrho_{q(\cdot)}(\nabla u_n^+)+\into j_\lambda \l(x,u_n^+\r)u_n^+\,dx \leq \eps_n\quad \text{for all }n\in\N.
	\end{align}
	Furthermore, from \eqref{50} and \eqref{53}, we obtain
	\begin{align*}
		\l|\into \frac{1}{p(x)}|\nabla u_n^+|^{p(x)}\,dx +\into \frac{1}{q(x)} |\nabla u_n^+|^{q(x)}\,dx-\into J_\lambda \l(x,u_n^+\r)\,dx \r| \leq c_9
	\end{align*}
	for some $c_9>0$ and for all $n \in \N$. This implies
	\begin{align}\label{55}
		\varrho_{p(\cdot)}(\nabla u_n^+)+\varrho_{q(\cdot)}(\nabla u_n^+) -\into p_+ J_\lambda \l(x,u_n^+\r)\,dx \leq p_+c_9 \quad\text{for all }n\in\N.
	\end{align}
	We add \eqref{54} and \eqref{55} and obtain
	\begin{align*}
		\into \l[j_\lambda \l(x,u_n^+\r)u_n^+-p_+ J_\lambda \l(x,u_n^+\r)\r]\,dx \leq c_{10} \quad\text{for some }c_{10}>0 \text{ and for all }n\in\N,
	\end{align*}
	which by \eqref{41} results in
	\begin{align}\label{56}
		\into \lambda \l[f\l(x,u_n^+\r)u_n^+-p_+F\l(x,u_n^+\r)\r]\,dx \leq c_{11}\l(1+\into \l(u_n^+\r)^{1-\eta(x)}\,dx\r)
	\end{align}
	for some $c_{11}>0$ and for all $n \in \N$.
	
	Hypotheses H$_1$(i), (iii) imply the existence of $\gamma_1 \in \l(0,\gamma_0\r)$ and $c_{12}>0$ such that
	\begin{align}\label{57}
		\gamma_1s^{-\tau(x)}-c_{12} \leq f(x,s)s-p_+F(x,s)\quad\text{for a.\,a.\,}x\in \Omega \text{ and for all }s \geq 0.
	\end{align}
	Using \eqref{57} in \eqref{56}, we have
	\begin{align*}
		\varrho_{\tau(\cdot)} \l(u_n^+\r) \leq c_{13} \l[1+\into \l(u_n^+\r)^{1-\eta(x)}\,dx\r]
		\quad\text{for some }c_{13}>0 \text{ and for all }n\in\N. 
	\end{align*}
	Hence, we see that
	\begin{align}\label{58}
		\l\{u_n^+\r\}_{n\in \N} \subseteq \Lp{\tau(\cdot)} \text{ is bounded}.
	\end{align}
	
	From hypothesis H$_1$(iii) we see that, without any loss of generality, we may assume that $\tau(x)<r<p_-^*$ for all $x\in\close$. Hence, $\tau_-<r<p_-^*$ and so we can find $t \in (0,1)$ such that
	\begin{align}\label{59}
		\frac{1}{r}=\frac{1-t}{\tau_-}+\frac{t}{p_-^*}.
	\end{align}
	Applying the interpolation inequality, see Papageorgiou-Winkert \cite[p.\,116]{16-Papageorgiou-Winkert-2018}, we have
	\begin{align*}
		\l\|u_n^+\r\|_r \leq \l\|u_n^+\r\|_{\tau_-}^{1-t} \l\|u_n^+\r\|^t_{p_-^*}.
	\end{align*}
	Thus, due to \eqref{58},
	\begin{align*}
		\l\|u_n^+\r\|_r^r \leq c_{14} \l\|u_n^+\r\|^{tr}_{p_-^*} \quad\text{for some }c_{14}>0 \text{ and for all }n\in\N.
	\end{align*}
	Then, by the Sobolev embedding theorem, we obtain
	\begin{align}\label{60}
		\l\|u_n^+\r\|_r^r \leq c_{15} \l\|u_n^+\r\|^{tr} \quad\text{for some }c_{15}>0 \text{ and for all }n\in\N.
	\end{align}
	We take $h=u_n^+\in \Wpzero{p(\cdot)}$ in \eqref{52} as test function and get	
	\begin{align*}
		\varrho_{p(\cdot)}(\nabla u_n^+)+\varrho_{q(\cdot)}(\nabla u_n^+) \leq \eps_n +\into j_\lambda \l(x,u_n^+\r)u_n^+\,dx \quad\text{for all }n\in\N,
	\end{align*}
	which by \eqref{41} and \eqref{60} gives 
	\begin{align}\label{61}
		\begin{split} 
			\varrho_{p(\cdot)}(\nabla u_n^+)+\varrho_{q(\cdot)}(\nabla u_n^+) 
			& \leq c_{16} \l[1+\into \lambda f\l(x,u_n^+\r)u_n^+\,dx\r]\\
			& \leq c_{17}\l[1+\lambda \l\|u_n^+ \r\|_r^r\r]\\
			& \leq c_{18}\l[1+\lambda \l\|u_n^+ \r\|^{tr}\r]
		\end{split} 
	\end{align}
	for some $c_{16}, c_{17}, c_{18}>0$ and for all $n\in\N$.
	
	From \eqref{59} we have
	\begin{align*}
		tr=\frac{p_-^*\l(r-\tau_-\r)}{p_-^*-\tau_-}<p_-.
	\end{align*}
	Therefore, from \eqref{61} and Proposition \ref{proposition_1} it follows that
	\begin{align*}
		\l\{u_n\r\}_{n\in \N} \subseteq \Wpzero{p(\cdot)} \text{ is bounded}.
	\end{align*}
	
	So, we may assume that 
	\begin{align}\label{62}
		u_n \weak u \quad\text{in }\Wpzero{p(\cdot)} 
		\quad\text{and}\quad
		u_n \to u \quad\text{in }\Lp{p(\cdot)}.
	\end{align}
	
	We choose $h= u_n-u \in \Wpzero{p(\cdot)}$ in \eqref{52}, pass to the limit as $n\to \infty$ and apply \eqref{62}. This yields 
	\begin{align*}
		\lim_{n\to\infty} \l [\l\lan A_{p(\cdot)}\l(u_n\r),u_n-u \r\ran+\l\lan A_{q(\cdot)}\l(u_n\r),u_n-u\r\ran\r]=0.
	\end{align*}
	Note that $A_{q(\cdot)}(\cdot)$ is monotone, so we have
	\begin{align*}
		\limsup_{n\to\infty} \l [\l\lan A_{p(\cdot)}\l(u_n\r),u_n-u \r\ran+\l\lan A_{q(\cdot)}\l(u\r),u_n-u\r\ran\r]\leq 0.
	\end{align*}
	Because of \eqref{62} we then derive 
	\begin{align*}
		\limsup_{n\to\infty} \l\lan A_{p(\cdot)}\l(u_n\r),u_n-u \r\ran \leq 0
	\end{align*}
	and so, by Proposition \ref{proposition_2},
	\begin{align*}
		u_n \to u \quad \text{in }\Wpzero{p(\cdot)}.
	\end{align*}
	This proves the Claim.
	
	Then, \eqref{48}, \eqref{49} and the Claim permit us the use of the mountain pass theorem. So we can find $\hat{u}\in\Wpzero{p(\cdot)}$ such that
	\begin{align*}
		\hat{u} \in K_{w_\lambda} \subseteq [\overline{u})\cap \interior,
	\end{align*}
	see \eqref{44}, and 
	\begin{align*}
		w_\lambda \l(u_0\r)<m_\lambda\leq w_\lambda \l(\hat{u}\r),
	\end{align*}
	see \eqref{48}. We conclude that $\hat{u} \in \interior$ is the second positive solution of \eqref{problem} for $\lambda \in \l(0,\lambda^*\r)$ and $\hat{u}\neq u_0$.
\end{proof}

It remains to decide whether the critical parameter value $\lambda^*>0$ is admissible.

\begin{proposition}
	If hypotheses H$_0$ and H$_1$ hold, then $\lambda^* \in \mathcal{L}$.
\end{proposition}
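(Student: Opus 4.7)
Proof plan. The strategy is a standard limiting procedure. Choose $\{\lambda_n\}_{n\in\N}\subseteq \mathcal{L}$ with $\lambda_n\uparrow \lambda^*$ and, for each $n$, pick $u_n\in\mathcal{S}_{\lambda_n}\subseteq \interior$. By Proposition \ref{proposition_6}, $\overline{u}\leq u_n$ for every $n\in\N$; this places $u_n$ above the truncation threshold used in Proposition \ref{proposition_11} and, in combination with the anisotropic Hardy inequality of Harjulehto--H\"{a}st\"{o}--Koskenoja \cite{Harjulehto-Hasto-Koskenoja-2005} and the lower bound $c_0\hat{d}\leq \overline{u}$, will let me control the singular integrand uniformly in $n$.

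Next, I would establish a uniform bound $\|u_n\|\leq C$ in $\Wpzero{p(\cdot)}$. Since $\overline{u}\leq u_n$, the truncation in \eqref{41} is inactive at $u_n$, so $u_n\in K_{w_{\lambda_n}}$, where $w_{\lambda_n}$ is the $C^1$-functional from the proof of Proposition \ref{proposition_11}. The plan is to mimic the C-condition estimates there, but starting from the Euler identity $w'_{\lambda_n}(u_n)=0$ alone: testing with $u_n$, invoking hypothesis H$_1$(iii) to extract control of $\varrho_{\tau(\cdot)}(u_n)$, and then closing the loop via the interpolation inequality (admissible because $(r-p_-)N/p_-<\tau_-$) and the Sobolev embedding $\Wpzero{p(\cdot)}\hookrightarrow \Lp{p_-^*}$, in the spirit of the estimate \eqref{61}.

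Given the bound, along a subsequence $u_n\weak u^*$ in $\Wpzero{p(\cdot)}$ and $u_n\to u^*$ in $\Lp{p(\cdot)}$ and $\Lp{r}$, with $\overline{u}\leq u^*$. Testing the equation for $u_n$ with $h=u_n-u^*$, the parametric part vanishes in the limit by H$_1$(i) and the strong $L^r$-convergence, while the singular part $\into (u_n-u^*)u_n^{-\eta(x)}\,dx$ is dominated by $\into \overline{u}^{-\eta(x)}|u_n-u^*|\,dx$ and tends to zero by the dominated convergence theorem. The $\Ss_+$-property of $A_{p(\cdot)}$ from Proposition \ref{proposition_2} then upgrades this to $u_n\to u^*$ strongly in $\Wpzero{p(\cdot)}$. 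Finally, passing to the limit in the weak formulation, with $\lambda_n\to \lambda^*$ and dominated convergence handling the singular integrand (majorant $\overline{u}^{-\eta(x)}|h|$, integrable by the Hardy inequality) and H$_1$(i) handling the parametric integrand, shows that $u^*$ satisfies the weak form of \eqref{problem} at $\lambda=\lambda^*$; since $u^*\geq \overline{u}\in\interior$, we conclude $u^*\in\mathcal{S}_{\lambda^*}$, so $\lambda^*\in\mathcal{L}$.

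The principal obstacle is the uniform bound. Unlike in Proposition \ref{proposition_11}, where the C-condition was invoked at a Cerami sequence with bounded energy, here we only have $w'_{\lambda_n}(u_n)=0$; any bound on $\|u_n\|$ must be extracted from the Euler equation itself. The delicate point is that the exponent $tr$ produced by the interpolation inequality must stay strictly below $p_-$, and this is precisely what the range $\tau(x)\in\l((r-p_-)N/p_-,p^*(x)\r)$ in H$_1$(iii) secures.
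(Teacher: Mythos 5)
Your overall limiting scheme (take $\lambda_n\nearrow\lambda^*$, pick $u_n\in\mathcal{S}_{\lambda_n}$, bound, pass to the limit with the $\Ss_+$-property and the Hardy-inequality majorant) is the same as the paper's, and the convergence part of your plan is fine. The gap is in the step you yourself identify as the principal obstacle: the uniform bound on $\|u_n\|$ cannot be ``extracted from the Euler equation itself.'' The C-condition estimates you want to mimic use \emph{two} inputs: the tested equation (which produces $-\varrho_{p(\cdot)}(\nabla u_n^+)-\varrho_{q(\cdot)}(\nabla u_n^+)+\int_\Omega j_\lambda(x,u_n^+)u_n^+\,dx\le\eps_n$, as in \eqref{54}) and the \emph{energy bound} (which produces \eqref{55}); only their sum yields control of $\int_\Omega\bigl[f(x,u_n^+)u_n^+-p_+F(x,u_n^+)\bigr]dx$, which is the quantity hypothesis H$_1$(iii) speaks about. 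If you test $w'_{\lambda_n}(u_n)=0$ with $u_n$ alone you obtain $\varrho_{p(\cdot)}(\nabla u_n)+\varrho_{q(\cdot)}(\nabla u_n)=\int_\Omega u_n^{1-\eta(x)}dx+\lambda_n\int_\Omega f(x,u_n)u_n\,dx$, and since $f$ is $(p_+-1)$-superlinear the right-hand side is not dominated by the left; the combination $f(x,s)s-p_+F(x,s)$ never appears, so H$_1$(iii) cannot be invoked and $\varrho_{\tau(\cdot)}(u_n)$ is not controlled. The range condition $\tau(x)\in\bigl((r-p_-)N/p_-,\,p^*(x)\bigr)$ only guarantees $tr<p_-$ in the interpolation step; it does not substitute for the missing energy information.

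The paper closes this gap by not taking $u_n\in\mathcal{S}_{\lambda_n}$ arbitrary: it takes the solution produced in the proof of Proposition \ref{proposition_11} as the global minimizer of the truncated functional $\hat w_{\lambda_n}$, which therefore satisfies $w_{\lambda_n}(u_n)\le w_{\lambda_n}(\overline{u})$, and a short computation (see \eqref{63}) shows $w_{\lambda_n}(u_n)\le\bigl[\tfrac{1}{q_-}-1\bigr]\bigl(\varrho_{p(\cdot)}(\nabla\overline{u})+\varrho_{q(\cdot)}(\nabla\overline{u})\bigr)<0$ uniformly in $n$. With this uniform energy bound in hand, the pair (bounded energy, Euler equation) reproduces the C-condition estimates verbatim and gives $\{u_n\}_{n\in\N}\subseteq\Wpzero{p(\cdot)}$ bounded; from there your limit passage goes through. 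So your proof becomes correct once you replace ``pick $u_n\in\mathcal{S}_{\lambda_n}$'' by this specific choice and add the energy inequality to your list of inputs.
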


\begin{proof}
	Let $\{\lambda_n\}_{n\in\N} \subseteq (0,\lambda^*)\subseteq \mathcal{L}$ be such that $\lambda_n\nearrow \lambda^*$ as $n \to \infty$. From the proof of Proposition \ref{11} we know that we can find $u_n \in \mathcal{S}_{\lambda_n} \subseteq \interior$ such that
	\begin{align*}
		w_{\lambda_n}(u_n) \leq w_{\lambda_n}(\overline{u}) \quad\text{for all }n\in\N.
	\end{align*}
	Applying \eqref{41}, $f \geq 0$ and Proposition \ref{proposition_4} we obtain
	\begin{align}\label{63}
		\begin{split} 
			&w_{\lambda_n}(u_n)\\
			& \leq \frac{1}{q_-} \l[\varrho_{p(\cdot)}\l(\nabla \overline{u}\r)+\varrho_{q(\cdot)}\l(\nabla \overline{u}\r)-\into \overline{u}^{1-\eta(x)}\,dx-\into \lambda_n f\l(x,\overline{u}\r)\overline{u}\,dx \r]\\
			& \leq \frac{1}{q_-} \l[\varrho_{p(\cdot)}\l(\nabla \overline{u}\r)+\varrho_{q(\cdot)}\l(\nabla \overline{u}\r)\r]-\into \overline{u}^{1-\eta(x)}\,dx\\
			& \leq \l[\frac{1}{q_-}-1\r] \l(\varrho_{p(\cdot)}\l(\nabla \overline{u}\r)+\varrho_{q(\cdot)}\l(\nabla \overline{u}\r)\r)<0
		\end{split} 
	\end{align}
	for all $n \in \N$. Furthermore, we have
	\begin{align}\label{64}
		\l \lan A_{p(\cdot)}\l(u_n\r),h\r\ran+\l \lan A_{q(\cdot)}\l(u_n\r),h\r\ran=\into j_\lambda \l(x,u_n\r)h\,dx
	\end{align}
	for all $h \in \Wpzero{p(\cdot)}$ and for all $n \in \N$.
	
	Using \eqref{63} and \eqref{64} and reasoning as in the Claim in the proof of Proposition \ref{proposition_11}, we obtain
	\begin{align*}
		u_n \to u^* \quad \text{in }\Wpzero{p(\cdot)} \quad\text{and}\quad \overline{u} \leq u^*,
	\end{align*}
	see Proposition \ref{proposition_6}. Hence, $u^* \in \mathcal{S}_{\lambda^*} \subseteq \interior$ and so $\lambda^* \in \mathcal{L}$.
\end{proof}

So, we have proved that
\begin{align*}
	\mathcal{L}=\l(0,\lambda^*\r].
\end{align*}

Summarizing our results we can state the following bifurcation-type result describing the changes in the set of positive solutions as the parameter moves on $\overset{\circ}{\R}_+=(0,+\infty)$.

\begin{theorem}
	If hypotheses H$_0$ and H$_1$ hold, then there exists $\lambda^*>0$ such that
	\begin{enumerate}
		\item[(a)]
			for every $\lambda\in (0,\lambda^*)$, problem \eqref{problem} has at least two positive solutions
		\begin{align*}
		u_0, \hat{u} \in \interior, \quad u_0\neq \hat{u};
		\end{align*}
		\item[(b)]
		for $\lambda=\lambda^*$, problem \eqref{problem} has at least one positive solution
		\begin{align*}
		u^*\in\interior;
		\end{align*}
		\item[(c)]
		for every $\lambda>\lambda^*$, problem \eqref{problem} has no positive solutions.
	\end{enumerate}
\end{theorem}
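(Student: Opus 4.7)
The final theorem is essentially a summary statement that packages together the propositions proved throughout the section, so the plan is to assemble the existing pieces rather than to do any new analysis. First I would set $\lambda^* = \sup \mathcal{L}$ as already defined, and observe from Propositions~\ref{proposition_5} and from the proposition bounding $\lambda^*$ that $0 < \lambda^* < +\infty$; this gives us a well-defined critical parameter on which to hang the three-part statement.

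For part~(a), I would fix $\lambda \in (0,\lambda^*)$ and directly invoke Proposition~\ref{proposition_11}, which produces two distinct positive solutions $u_0, \hat{u} \in \interior$ with $u_0 \neq \hat{u}$. The statement of (a) is literally the conclusion of that proposition, so nothing further is needed. For part~(b), I would apply the last proposition of the preceding block, which shows that $\lambda^* \in \mathcal{L}$ by extracting $u_n \in \mathcal{S}_{\lambda_n}$ along a sequence $\lambda_n \nearrow \lambda^*$ and passing to the limit using the $\Ss_+$-property of $A_{p(\cdot)}$ together with Proposition~\ref{proposition_6} to guarantee $\overline{u} \leq u^*$. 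This limit $u^* \in \mathcal{S}_{\lambda^*} \subseteq \interior$ supplies the required positive solution in $\interior$.

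Part~(c) is immediate from the definition $\lambda^* = \sup \mathcal{L}$ combined with Proposition~\ref{proposition_8} (which says $\mathcal{L}$ is downward closed in $(0,+\infty)$). Indeed, if some $\lambda > \lambda^*$ belonged to $\mathcal{L}$, then by Proposition~\ref{proposition_8} every $\mu \in (0,\lambda)$ would belong to $\mathcal{L}$, contradicting the definition of $\lambda^*$ as the supremum. Hence $\mathcal{S}_\lambda = \emptyset$ for all $\lambda > \lambda^*$.

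There is no genuine obstacle here, since each of (a), (b), (c) has already been established in the preceding propositions; the only task is bookkeeping — confirming that $\lambda^*$ as defined is finite and strictly positive, and that the assertions we need are exactly the conclusions of the propositions we cite. The proof therefore reduces to two or three lines invoking Propositions~\ref{proposition_11}, the proposition showing $\lambda^* \in \mathcal{L}$, Proposition~\ref{proposition_8}, and the finiteness bound $\lambda^* < +\infty$, with no further computation required.
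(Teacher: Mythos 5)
Your proposal is correct and matches the paper exactly: the theorem is stated there as a summary ("Summarizing our results...") with no separate proof, precisely because parts (a), (b), (c) are the conclusions of Proposition \ref{proposition_11}, the proposition establishing $\lambda^*\in\mathcal{L}$, and the definition $\lambda^*=\sup\mathcal{L}$ together with the finiteness bound. The only cosmetic remark is that for part (c) the definition of the supremum alone already gives $\lambda\notin\mathcal{L}$ for $\lambda>\lambda^*$, so the appeal to Proposition \ref{proposition_8} is not strictly needed there.
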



\end{document}